\documentclass[onefignum,onetabnum]{siamart190516}

\usepackage{graphics} 
\usepackage{epsfig} 
\usepackage{mathptmx} 
\usepackage{times} 
\usepackage{amsmath} 
\usepackage{amssymb,mathrsfs}  

\usepackage{enumitem}




\newtheorem{applemma}{Lemma A\hspace{-0.5mm}}[section]
{}

\newcommand{\enurom}[1]
{\begin{enumerate}[label=\rm{(\roman*)}]
		#1
\end{enumerate}}

\newcommand{\enualp}[1]
{\begin{enumerate}[label=(\alph*)]
		#1
\end{enumerate}}

\newcommand{\ccd}[0]
{(\cdot)}

\newcommand{\ttnn}[1]
{\textnormal{#1}}

\newcommand{\modulo}[1]
{\left|#1\right|}

\newcommand{\rif}[1]
{(\ref{#1})}

\newcommand{\norm}[1]
{\left\| #1 \right\|}

\newcommand{\ra}
{\rightarrow}

\newcommand{\ps}[2]
{\langle\,#1,#2\rangle}

\newcommand{\rt}[0]
{(t)}

\newcommand{\rs}[0]
{(s)}

\newcommand{\mineq}[1]
{\leq #1}

\newcommand{\mageq}[1]
{\geq #1}

\newcommand{\equazioneref}[2]
{
	\begin{align}\label{#1}\begin{split}
	#2
	\end{split}\end{align}
}

\newcommand{\sistemaref}[2]
{
	\begin{align}\label{#1}
	\begin{cases}
	#2
	\end{cases}
	\end{align}
}

\newcommand{\sistemanoref}[1]
{
	\begin{align*}
	\begin{cases}
	#1
	\end{cases}
	\end{align*}
}

\newcommand{\eee}[1]
{
	\begin{align*}
	#1
	\end{align*}
}

\newcommand{\graffe}[1]
{\left\{ #1 \right\}}

\newcommand{\eps}[0]
{\varepsilon }

\newcommand{\ccal}[1]
{\mathcal{#1}}

\newcommand{\scr}[1]
{\mathscr{#1}}

\newcommand{\bb}[1]
{\mathbb{#1}}

\DeclareMathOperator*{\Liminf}{Lim\,inf}
\DeclareMathOperator*{\Limsup}{Lim\,sup}

\newcommand{\rpiu}[0]
{ \mathbb R^+ }

\newcommand{\rzero}[0]
{ \mathbb R^+ }




\usepackage{lipsum}
\usepackage{amsfonts}
\usepackage{graphicx}
\usepackage{epstopdf}
\usepackage{algorithmic}
\ifpdf
\DeclareGraphicsExtensions{.eps,.pdf,.png,.jpg}
\else
\DeclareGraphicsExtensions{.eps}
\fi


\newsiamremark{remark}{Remark}
\newsiamremark{hypothesis}{Hypothesis}
\crefname{hypothesis}{\textnormal{H.}\hspace{-0.5mm}}{Hypotheses}
\newsiamthm{claim}{Claim}

\headers{Two-player representation, infinite horizon, and state constraints}{V. Basco and P. M. Dower}

\title{A two-player game representation for a class of infinite horizon control problems under state constraints\thanks{{A preliminary version of this manuscript appears in arXiv:1912.08554 and an abbreviated version was accepted to be published in the proceedings of the 2020 American Control Conference. This research is partially supported by AFOSR grant FA2386-16-1-4066.}}}

\author{Vincenzo Basco\thanks{Basco (corresponding author) and Dower are with the Department of Electrical \& Electronic Engineering, University of Melbourne, Victoria 3010, Australia. Email {\tt\{vincenzo.basco, pdower\}@unimelb.edu.au}.}
	\and Peter M. Dower\footnotemark[2]
}

\usepackage{amsopn}


\ifpdf
\hypersetup{
	pdftitle={A two-player game representation for a class of infinite horizon control problems under state constraints},
	pdfauthor={Vincenzo Basco and Peter M. Dower}
}
\fi




\begin{document}
	
	\renewcommand\ttdefault{cmvtt}

	\maketitle

	\begin{abstract}
		In this paper feedback laws for a class of infinite horizon control problems under state constraints are investigated. We provide a two-player game representation for such control problems assuming time dependent dynamics and Lagrangian and the set constraints merely compact. Using viability results recently investigated for state constrained problems in an infinite horizon setting, we extend some known results for the linear quadratic regulator problem to a class of control problems with nonlinear dynamics in the state and affine in the control.  Feedback laws are obtained under suitable controllability assumptions.
	\end{abstract}

	\begin{keywords}
		Optimal control; Two-player game representation; Infinite horizon; State constraints.
	\end{keywords}
	
	\begin{AMS}
		49J15, 34A60, 35Q70.
	\end{AMS}
	
	
	\section{Introduction}

	
	The literature dealing with optimality conditions for  finite or infinite horizon optimal control problems, without state constraints, is quite rich (\cite{bascocannfrank2019semisubRiem,cannfrank2018valuefunction,Carlson:1987:IHO:38189,seierstad1986optimal}, and the references therein). Recovering feedback laws in the presence of state constraints, on the other hand, is challenging for infinite horizon problems (\cite{bascofrankcann2017necessary,vinterpappas1982amaximumprinciple}): when constraints are imposed on the state, or when barrier functions are involved, then finite horizon techniques typically fail for infinite horizon contexts (\cite{bascofrankowska2018hjbe}).
	
	We consider the following infinite horizon control problem:
	\eee{
		\ttnn{minimize }  \int_{t}^{\infty} \scr L(s,\xi\rs,u\rs)\,ds
	}
	over all $(\xi\ccd,u\ccd)$ satisfying the dynamics and state constraints described by
	\sistemanoref{
		\xi'\rs=\nabla h(\xi\rs)^{-1}A\rs h(\xi\rs)+\nabla h(\xi\rs)^{-1}B\rs u\rs & s\in[t,\infty)\ttnn{ a.e.}\\
		\xi(t)=x\\
		\xi\ccd\subset \Omega,
	}
	where $\Omega\subset \bb R^n$ is compact, $(t,x)\in \rzero\times \Omega$ is the initial datum, $A\ccd\in \bb R^{n\times n}$ and $B\ccd\in \bb R^{n\times m}$ are given time-dependent matrices, $\scr L:\bb R\times \bb R^n\times \bb R^m\ra \bb R^+$ is the Lagrangian, and
	$h:\bb R^n\ra \bb R^n$ is a diffeomorphism. We focus on Lagrangians as marginal functions, i.e.,
	\equazioneref{Lagrangiana_sup_intro}{
		\scr L(s,\xi,u)=\sup_{\alpha\mageq 0}\,\{\ps{h(\xi)}{Q(s,\alpha)h(\xi)}+\ps{u}{Ru}\},
	}
	where $Q(s,\alpha)\in \bb R^{n\times n}$ and $ R\in \bb R^{m\times m}$ are given positive symmetric matrices for all $s,\,\alpha\mageq 0$. In the special case where $h\ccd$ is the identity, convex Lagrangians can be rewritten using duality arguments in the form \cref{Lagrangiana_sup_intro} and optimality conditions are investigated (\cite{dowercantonimcene2019gamerepbarr,rockafellar1974conjugate,rockafellar2008linear}). Further specialization to the case where $\scr L$ is also quadratic in the state and control, yields a linear-quadratic regular (LQR) problem. LQR problems ar well studied in the context of convex control problems and Hamilton-Jacobi-Bellman developments for finite and infinite dimensional systems are well known (\cite{andersonmoore1971linearoptimalcontrol,DaPrato:2006:RepresentationControl,curtainprichard1978infinitelinear}). Solutions of a relevant parametrised family of finite horizon LQR problems, with running costs $\int_t^T( \ps{\xi\rs}{ Q^\alpha(s)\xi\rs}+\ps{u\rs}{R\rs u\rs})\,ds$ where ${Q}^\alpha\ccd=Q(\cdot,\alpha\ccd)$ and $\alpha:[t,T]\ra\bb R^+$ is a continuous function, are strictly related with the solutions of the set of Riccati ordinary differential equations
	\equazioneref{Riccati_intro}{
		P'+A^\star P+P A- P BR^{-1} B^\star P+ Q^\alpha=0\quad \ttnn{a.e.}
	}
	with final condition $P(T)=0$ (\cite{andersonmoore1971linearoptimalcontrol,DaPrato:2006:RepresentationControl}). Convex duality tools to study the LQR problem in the language of calculus of variations for finite time horizons problems have been developed and applied by Rockafellar (\cite{rockafellar1970conjugate,rockafellar1974conjugate,rockafellar2008linear,rockafellar2009variational}).  Moreover, Da Prato and Ichikawa (\cite{dapratoichikawa1987optimalcontrol,dapratoichikawa1988quadraticcontrollinearsystems}) studied, for almost-periodic dynamics, the solutions of the corresponding Riccati equations. However, when the system is subject to state constraints, or when non-quadratic costs or barrier functions are involved, the linear and quadratic techniques are no longer applicable. Recent work \cite{dowercantonimcene2019gamerepbarr} investigates, using convex duality techniques, two-player game representation results for LQR problems with convex state constraints imposed via barrier functions type (\cite{dowercantoni2017state,DMC2:16}). 
	
	In this work, we address the above state constrained control problem where the Lagrangian can be more generally expressed as in \cref{Lagrangiana_sup_intro}. The constraint set is assumed merely compact and no smoothness conditions are imposed on its boundary. We show that the associated value function can be written as a supremum of a parametrized set of value functions of quadratic control problems. Techniques from non-smooth analysis and viability theory are used to obtain the optimal synthesis for each parametrized problem. Furthermore, we provide controllability conditions to derive feedback laws in terms of a solution $P$ of the Riccati differential equation \cref{Riccati_intro} on the infinite horizon (\Cref{optimal_synthesis}). Such $P$ in general time dependent. However, when the dynamics and Lagrangian are time invariant.

	The outline of the paper is as follows. In \Cref{preliminaries} we provide basic definitions and facts from nonsmooth analysis and viability. \Cref{value_function} is devoted to the two-player game formulation of a large class of infinite horizon control problems with state constraints. In \Cref{optimal_synthesis}, we provide sufficient conditions for obtaining feedback laws of infinite horizon quadratic control problems, under state constraints and controllability assumptions.


	\section{Preliminaries}\label{preliminaries}
	
	We denote the set of nonnegative real numbers by $\rzero$ and the set of natural numbers by $\bb N$. $B(x,\delta)$ denotes the closed ball in $\mathbb{R}^k$ with radius $\delta>0$ centered at $x\in \mathbb{R}^k$ and $\mathbb{B}\doteq B(0,1)$. $|\cdot|$ and $\langle \cdot , \cdot \rangle$ denote the Euclidean norm and scalar product, respectively. With $C\subset \bb R^k$, the {interior} of $C$ is denoted by ${\rm int}\,C$, the closure of $C$ by $\overline{C}$,
	the boundary of $C$ by $\partial C$, and the distance from $x\in \mathbb{R}^k$ to $C$ by $d_C(x)\doteq \inf\{|x-y|\,:\,y\in C\}$. The negative polar cone of $C$, written $C^-$, is the set $\graffe{v\in \bb R^k\,:\,\ps{v}{x}\mineq 0\; \forall x\in C}$. The set of all $n\times m$ real matrices $M$ is denoted by $\bb R^{n\times m}$, endowed with the norm $\norm{M}=\sup_{x\neq 0} {\modulo{Mx}}/{\modulo{x}}$. If $M\in \bb R^{n\times m}$, $M^\star$ stands for the transpose matrix of $M$ and, if $M$ is invertible, we write $M^{-\star} \doteq (M^\star)^{-1}$. A matrix $M\in \bb R^{n\times n}$ is said to be $r$-negative definite if $r>0$ and $\ps{Mx}{x}\mineq -r|x|^2$ for all $x\in \bb R^n$.
	
	For $p\in \mathbb{R}^+\cup \{\infty\}$ and a Lebesgue measurable set $I\subset \bb R^n$ we denote by $L^p(I;\mathbb{R}^k)$ the space of $\mathbb{R}^k$-valued Lebesgue measurable functions on $I$ endowed with the norm $\|\cdot\|_{p,I}$ (if $I=[a,b)$ we write $L^p(a,b;\mathbb{R}^k)$). We say that $f\in L^p_{{\rm loc}}(I;\mathbb{R}^k)$ if $f\in L^p(J;\mathbb{R}^k)$ for any compact subset $J\subset I$. We denote by $C(I;\bb R^k)$ the space of all continuous $\bb R^k$-valued functions. If $I$ is open, then we denote by  $C^1(I;\bb R^k)$ the space of all continuously differentiable $\bb R^k$-valued functions on $I$.
	The set of all measurable functions $\alpha:\rzero\ra \rzero$ is denoted by $\ccal A$.
	
	Let \(Y\) be a measurable space and $X$ a Banach space. Consider a set-valued map \(F : Y \rightsquigarrow X\). We denote by $\ttnn{dom }F$ the \textit{domain} of $F$, i.e., the set of all $y\in Y$ such that $F(y)\neq \emptyset$. A measurable function \(f : Y \ra X\) satisfying \(f(y) \in F(y)\) for all $y \in Y$
	is called a \textit{measurable selection} of \(F .\) Existence of a measurable selection may be guaranteed via \cite[Theorem 8.1.3]{aubin2009set}.
	\begin{proposition}[\cite{aubin2009set}]\label{measurableselection}
		Let \(F : Y \rightsquigarrow X\) be a measurable set-valued map with closed nonempty values. Then there exists a measurable selection of \(F\).
	\end{proposition}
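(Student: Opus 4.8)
The plan is to recover a selection of $F$ as the pointwise limit of a Cauchy sequence of measurable, countably-valued ``approximate selections'', which is the classical Kuratowski--Ryll-Nardzewski construction. Since $X$ is a Banach space, I would fix once and for all a countable dense set $\graffe{x_i}_{i\in\bb N}\subset X$, and recall that measurability of $F$ means that $F^{-1}(V)\doteq\graffe{y\in Y\,:\,F(y)\cap V\neq\emptyset}$ is measurable for every open $V\subset X$; I would also use that, for each fixed $y$, the map $x\mapsto d_{F(y)}(x)$ is $1$-Lipschitz.

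First I would build the initial approximation. For each $i$ the set $A_i\doteq F^{-1}(\ttnn{int}\,B(x_i,1))$ is measurable, and these sets cover $Y$ because $F$ has nonempty values and $\graffe{x_i}$ is dense. Disjointifying, $C_i\doteq A_i\setminus\bigcup_{j<i}A_j$, the function $f_0$ defined to equal $x_i$ on $C_i$ is measurable and satisfies $d_{F(y)}(f_0(y))<1$ for all $y$. For the inductive step, given a measurable, countably-valued $f_{n-1}$ with $d_{F(y)}(f_{n-1}(y))<2^{-(n-1)}$ everywhere, I would set
\[
D_i\doteq F^{-1}(\ttnn{int}\,B(x_i,2^{-n}))\cap f_{n-1}^{-1}(\ttnn{int}\,B(x_i,2^{-(n-1)})),
\]
which is measurable; the claim is that $\bigcup_i D_i=Y$, and then, disjointifying once more, the function $f_n$ equal to $x_i$ on $D_i\setminus\bigcup_{j<i}D_j$ is measurable, countably-valued, and satisfies both $d_{F(y)}(f_n(y))<2^{-n}$ and $|f_n(y)-f_{n-1}(y)|<2^{-(n-1)}$ for every $y$.

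To finish, for each $y$ the bound $\sum_n 2^{-(n-1)}<\infty$ forces $\tonde{f_n(y)}_n$ to be Cauchy, hence convergent in the complete space $X$ to a point $f(y)$; the resulting $f$ is measurable as a pointwise limit of measurable functions, and $d_{F(y)}(f(y))=\lim_n d_{F(y)}(f_n(y))\le\lim_n 2^{-n}=0$, so $f(y)\in\overline{F(y)}=F(y)$ since the values of $F$ are closed. Thus $f$ is the desired measurable selection.

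I expect the main obstacle to be the verification that $\bigcup_i D_i=Y$ at each inductive stage: given $y$, one must produce a single dense point $x_i$ that is simultaneously within $2^{-n}$ of $F(y)$ and within $2^{-(n-1)}$ of $f_{n-1}(y)$. The device is to first pick $z\in F(y)$ with $d(f_{n-1}(y),z)<2^{-(n-1)}$ --- here the strict inequality propagated along the induction is essential --- and then choose, by density, $x_i$ with $d(x_i,z)<\min\graffe{2^{-n},\,2^{-(n-1)}-d(f_{n-1}(y),z)}$, which yields $y\in D_i$. Separability of $X$ is used precisely to run this countable covering argument, while completeness of $X$ enters only in the final passage to the limit.
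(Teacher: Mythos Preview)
The paper does not supply its own proof of this proposition; it is quoted verbatim as Theorem~8.1.3 of Aubin--Frankowska, \emph{Set-Valued Analysis}. Your argument is precisely the classical Kuratowski--Ryll-Nardzewski construction, which is the proof given in that reference, so there is nothing to compare: your route and the cited source coincide.

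One caveat worth flagging: you open with ``Since $X$ is a Banach space, I would fix once and for all a countable dense set $\graffe{x_i}_{i\in\bb N}\subset X$'', but a Banach space need not be separable. The Kuratowski--Ryll-Nardzewski theorem requires $X$ to be a complete \emph{separable} metric space (a Polish space), and the proof in Aubin--Frankowska is stated under that hypothesis. The paper's statement is slightly loose on this point, but since your entire covering argument rests on the countable dense set, you should make separability an explicit assumption rather than appear to derive it from the Banach-space structure alone. In the paper's applications the target space is always some $\bb R^k$, so the issue is harmless there, but as a standalone proof the hypothesis must be stated.
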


	Let $D\subset \bb R^n$ be nonempty and $\graffe{A_y}_{y\in D}$ be a family of nonempty subsets of $\bb R^k$. The Kuratowski-Painlev\'e \textit{upper and lower limits} (\cite{rockafellar2009variational}) of $A_h$ at $x\in D$ are the sets defined, respectively, by
	\eee{
		\Limsup_{y\ra x,\, y\in D} A_y=\{v\in \bb R^k\,:\, \liminf_{y\ra x,\,y\in D} d_{A_y}(v)=0\},\;\;\;	\Liminf_{y\ra x,\, y\in D} A_y=\{v\in \bb R^k\,:\, \limsup_{y\ra x,\, y\in D} d_{A_y}(v)=0\}.
	}
	
	
	Assume now $X=Y=\bb R^k$. $F$ is said to be \textit{upper semicontinuous at} $x\in \bb R^k$ if $x\in \ttnn{dom }F$ and for any $\varepsilon>0$ there exists $\delta>0$ such that $F(y)\subset F(x)+\varepsilon \bb B$ for all $\modulo{y-x}\mineq \delta$. If $F$ is upper semicontinuous at every $x\in \bb R^k$ then $F$ is said to be \textit{upper semicontinuous}.
	$F$ is said to be \textit{lower semicontinuous at} $x\in \bb R^k$ if $\Liminf_{y\ra x}F(y)\subset
	F(x)$. $F$ is said to be \textit{lower semicontinuous} if $F$ is
	lower semicontinuous at every $x\in \bb R^k$.  $F$ is called
	\textit{continuous at} $x\in \bb R^k$ if it is lower and upper
	semicontinuous at $x$ and  it is continuous if it is continuous at
	each point $x$.
	The set valued map $F$ is said to be $k$-\textit{Lipschitz continuous}, for some $k\mageq 0$, if $F(x)\subset F(\tilde x)+ k\modulo{x-\tilde x}\bb B$ for all $x,\,\tilde x\in \bb R^k$.
	
	Consider a nonempty subset $E\subset \bb R^k$ and $x\in \overline{E}$. The \textit{contingent cone} $T_E(x)$ to $E$ at $x$ is defined as the set of all vectors $v\in \bb R^k$ such that $\liminf_{h\ra 0+} {d_E(x+hv)}/h=0$.
	The \textit{limiting normal cone} to $E$ at $x$, written $N_E(x)$, is defined with respect to the negative polar cone of the contingent cone by $N_E(x)\doteq \Limsup_{y\ra x,\, y\in E}{T_E(y)}^-$. It is known that  $N_E(x)^-\subset T_E(x)$ and 
	\eee{
		x\rightsquigarrow N^1_E(x)\doteq N_E(x)\cap \partial \bb B
	}
	is upper semicontinuous (\cite{Aubin:1991:viabilitytheory}), whenever $E$ is closed. The following viability result is a particular case of a more general one (\cite[Theorem 4.2]{frankowskaplaskrze1995measviabth}).
	
	\begin{proposition}[\cite{frankowskaplaskrze1995measviabth}]\label{viability}
		Let $0\mineq t<T$ and $F:[t,T]\times \bb R^k\rightsquigarrow \bb R^k$ be a measurable set-valued map with closed convex values such that $F(s,\cdot)$ is continuous for a.e. $s\in [t,T]$ and there exists $\theta\in L^1(t,T;\rzero)$ satisfying $\sup_{v\in F(s,x)}|v|\mineq \theta(s)$ for all $x\in \bb R^k$ and for a.e. $s\in [t,T]$. Consider a closed subset \(K \subset \bb R^k\) and suppose that
		\eee{
			F(s,x)\cap T_K(x)\neq \emptyset\quad \ttnn{for a.e. }s\in [t,T],\, \forall x\in \partial K.
		}	
		Then for all \(x_{0} \in K\) there exists a solution $\xi\ccd$ on $[t,T]$ to the differential inclusion $\xi'\rs\in F(s,\xi\rs)$ such that $\xi\ccd\subset K$ and $\xi(t)=x_0$.	
	\end{proposition}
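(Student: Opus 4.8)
\emph{Reduction and strategy.} The plan is to realise the viable solution $\xi$ as a uniform limit of Euler-type polygonal approximations, exploiting the $L^1$ bound on the velocities for compactness and the convexity of the values of $F$ to pass to the limit. First note that if $\xi$ is any solution of $\xi'\rs\in F(s,\xi\rs)$ on $[t,T]$ with $\xi(t)=x_0$, then $|\xi\rs-x_0|\mineq\int_t^s\theta(\tau)\,d\tau\mineq M\doteq\int_t^T\theta(\tau)\,d\tau$ for all $s$, and the same bound will hold for all the approximations built below. Hence we may replace $K$ by the compact set $K\cap B(x_0,2M)$: every boundary point of the truncated set lying in the interior of $B(x_0,2M)$ is still a boundary point of $K$, so the tangency hypothesis is inherited there, while the sphere $\partial B(x_0,2M)$ is never reached on $[t,T]$. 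From now on $K$ is assumed compact.

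\emph{Step 1 (approximate viable trajectories).} Let $Z\subset[t,T]$ be a null set off which $F(s,\cdot)$ is continuous and $F(s,x)\cap T_K(x)\neq\emptyset$ for every $x\in\partial K$. By the Scorza--Dragoni theorem, for each $\eta>0$ there is a compact set $I_\eta\subset[t,T]\setminus Z$ with $\ttnn{meas}\,([t,T]\setminus I_\eta)<\eta$ on which $F$ is jointly continuous in $(s,x)$. Fix a partition $t=\tau_0<\dots<\tau_N=T$ of small mesh $\delta$ and build a polygonal curve $\xi$ recursively over the subintervals: on the part of $[\tau_i,\tau_{i+1}]$ contained in $I_\eta$ we pick (using \Cref{measurableselection}, the lower semicontinuity of $F(s,\cdot)$, and the tangency condition invoked at a point $\bar x\in\partial K$ nearest to $\xi(\tau_i)$ when $\xi(\tau_i)$ is close to $\partial K$) a measurable velocity $v\rs\in F(s,\xi(\tau_i))$ that is ``inward enough'' to keep $d_K(\xi\rs)$ of order $o(\delta)$; on the part outside $I_\eta$ we take any bounded measurable selection $v\rs\in F(s,\xi(\tau_i))$, which perturbs $d_K(\xi\ccd)$ by at most $\int_{[t,T]\setminus I_\eta}\theta$. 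Choosing $\delta$ small enough in terms of a modulus of joint continuity of $F|_{I_\eta}$, and then $\eta$ small, yields, for every $\eps>0$, an absolutely continuous curve $\xi_\eps$ on $[t,T]$ with $\xi_\eps(t)=x_0$, $|\xi_\eps'\rs|\mineq\theta\rs$ a.e., $d_K(\xi_\eps\rs)\mineq\eps$ for all $s$, and $\xi_\eps'\rs\in F(s,\xi_\eps\rs+\rho_\eps\bb B)$ a.e.\ with $\rho_\eps\to0$ (the last inclusion holds because, by uniform integrability of $\theta$, $\sup_s|\xi_\eps\rs-\xi_\eps(\tau_{i(s)})|\to0$).

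\emph{Step 2 (passage to the limit).} The bound $|\xi_\eps'|\mineq\theta\in L^1(t,T;\rzero)$ makes $\{\xi_\eps\}$ equibounded and equi-absolutely continuous, so along a subsequence $\xi_\eps\to\xi$ uniformly on $[t,T]$ by the Arzel\`{a}--Ascoli theorem; the same bound gives uniform integrability of $\{\xi_\eps'\}$, so along a further subsequence $\xi_\eps'\to g$ weakly in $L^1(t,T;\bb R^k)$ by the Dunford--Pettis theorem, and then $g=\xi'$ a.e.\ since $\xi_\eps\rs=x_0+\int_t^s\xi_\eps'(\tau)\,d\tau$ for all $s$. A standard convergence-theorem (Mazur's lemma) argument, using that $F(s,\cdot)$ is upper semicontinuous with closed convex values together with $\xi_\eps\to\xi$ uniformly and $\rho_\eps\to0$, gives $\xi'\rs\in F(s,\xi\rs)$ for a.e.\ $s$; and since $K$ is closed and $d_K(\xi_\eps\rs)\mineq\eps\to0$, we obtain $\xi\rs\in K$ for all $s$. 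Thus $\xi$ is the desired viable solution, with $\xi(t)=x_0$.

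\emph{Main obstacle.} The delicate point is the construction in Step 1 under merely measurable (Carath\'eodory) time-dependence: at a node on $\partial K$ the hypothesis supplies an inward velocity only for a.e.\ $s$, and these pointwise-in-$s$ selections must be glued into a single velocity that is simultaneously a \emph{measurable} selection of $s\mapsto F(s,\xi(\tau_i))$, keeps the trajectory $\eps$-close to $K$ over the whole subinterval, and accommodates the small exceptional time-set; this is exactly where the Scorza--Dragoni theorem and the measurable selection result (\Cref{measurableselection}) are needed, and why the statement is invoked as a special case of the general measurable viability theorem of \cite{frankowskaplaskrze1995measviabth} rather than of a classical autonomous one. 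Alternatively, one may simply specialise \cite[Theorem 4.2]{frankowskaplaskrze1995measviabth}, taking the target multimap appearing there to be $x\mapsto T_K(x)$.
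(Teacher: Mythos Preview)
The paper does not prove this proposition at all: it is stated as a citation, explicitly introduced as ``a particular case of a more general one (\cite[Theorem 4.2]{frankowskaplaskrze1995measviabth})'', and no argument is given. So there is no ``paper's own proof'' to compare against; your final sentence (``one may simply specialise \cite[Theorem 4.2]{frankowskaplaskrze1995measviabth}'') is in fact exactly what the paper does.

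Your sketch, on the other hand, outlines a genuine proof along the lines of how such measurable viability results are established in the literature: reduce to a compact $K$, use Scorza--Dragoni to handle the measurable time dependence, build Euler-type approximate solutions that stay $\eps$-close to $K$ via the tangency condition and measurable selection, and pass to the limit with Arzel\`{a}--Ascoli, Dunford--Pettis, and Mazur. This is a correct outline and is essentially the strategy of \cite{frankowskaplaskrze1995measviabth}. The only place your write-up is thin is Step~1, which you rightly flag as the ``main obstacle'': turning the a.e.-in-$s$ inward selection at a boundary point into a measurable velocity that keeps $d_K(\xi_\eps\rs)\mineq\eps$ \emph{uniformly} over each subinterval requires more than what you wrote (one typically needs a quantitative version of the tangency condition uniform on the compact $I_\eta$, obtained from the joint continuity of $F$ there and the upper semicontinuity of $x\rightsquigarrow T_K(x)^-$). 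For a self-contained paper one would either fill this in or, as the authors do, simply invoke the cited theorem.
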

	
	\begin{remark}\label{remark_extension_set_valued_map}\rm
		To apply \cref{viability} to locally bounded set-valued maps we extend it in the following way: let $\tilde G:[t,T]\times  \bb R^k\rightsquigarrow \bb R^{k}$ be a set-valued map  such that for all $R>0$ there exists $\tilde \theta_R\in L^1(t,T;\rpiu)$ satisfying  $\sup_{v\in \tilde G(s,x)}|v|\mineq \tilde \theta_R(s)$ for a.e. $s\in [t,T]$ and all $x\in B(0,R)$. Now consider the set-valued map $G_*:[t,T]\times \bb R^{k}\rightsquigarrow \bb R^{k}$ defined by $G_*(s,x)=\tilde G(s,x)$ for any $(s,x)\in [t,T]\times B(0,M)$ and $G_*(s,x)=\tilde G(s,\pi(x))$ for any $(s,x)\in [t,T]\times (\bb R^{k}\backslash B(0,M))$, where $\pi\ccd$ stands for the projection operator onto $B(0,M)$, i.e., $|\pi(x)-x|=d_{ B(0,M)}(x)$, $M\doteq R+{\int_{t}^{T}\tilde \theta_R(s)\,ds}$, $R\doteq\max_{x\in \Omega}|x|$, and $\Omega$ as in the Introduction. Then, for a suitable $R'>R$, we have $	\sup_{v\in G_*(s,x),\, x\in \bb R^{k}}\modulo{v}\mineq \tilde \theta_{R'}\rs$ for a.e. $s\in [t,T]$. Thus, $\xi:[t,T]\ra \bb R^{k}$, with $\xi(t)\in B(0,R)$, satisfies $\xi'\rs\in G_*(s,\xi\rs)$ if and only if $\xi'\rs\in \tilde G(s,\xi\rs)$.
	\end{remark}
	\section{Value function and two-player game representation}\label{value_function}
	
	Consider the problem
	\equazioneref{funzionalecosto}{
		{\rm  minimize } \; J(t,x,u)\doteq \int_{t}^{\infty}\scr L(s,\xi(s) ,u (s) )\,ds
	}
	over all trajectory-control pairs $(\xi(\cdot),u\ccd)$ satisfying the state constrained system
	\sistemaref{sistemacontrollo}{
		\xi' (s) =f_0(s,\xi(s))+f_1(s,\xi\rs)u\rs & s\in[t,\infty)\ttnn{ a.e.}  \\
		\xi(t)=x\\
		\xi(\cdot)\subset \Omega,
	}
	where
	$(t,x)\in \rzero\times \Omega$ is the initial datum, and the dynamics $f_0:\bb R\times \bb R^n\ra \bb R^n$, $f_1:\bb R\times \bb R^n \ra \bb R^{n\times m}$ and the Lagrangian $\scr L:\bb R\times \bb R^n\times \bb R^m\ra \bb R^+$ are functions measurable in time. In the following we set for any $(s,\xi,u)\in \bb R\times \bb R^n\times \bb R^m$
	\eee{
		f(s,\xi,u)\doteq f_0(s,\xi)+f_1(s,\xi)u.	}

	\begin{definition}\label{def_feasible}
		A trajectory-control pair $(\xi(\cdot),u\ccd)$ that satisfies the state constrained system \cref{sistemacontrollo} is called \textit{feasible} (we also refer to such a trajectory as feasible). The set of all controls such that the associated trajectory is feasible at the initial datum $(t,x)$ is denoted by $\ccal U{(t,x)}$. For any $u\in \ccal U(t,x)$ we denote by $\xi_{u}\ccd$ the trajectory solving \cref{sistemacontrollo} associated with the control $u\ccd$ and starting from $x$ at time $t$.
	\end{definition}

	The function $W:\rzero\times \Omega \to  \mathbb{R}\cup \{\pm\infty\}$
	\equazioneref{problema_W_inf}{
		W(t,x)=\inf_{u\in \ccal U(t,x)}J(t,x,u)
	}
	is called the {\em value function} of problem \cref{funzionalecosto}-\cref{sistemacontrollo}. By convention $W(t,x)\doteq +\infty$ if no feasible trajectory-control pair exists at $(t,x)$ or if the integral in \cref{funzionalecosto} is not defined for every feasible pair. A control $u\in \ccal U(t,x)$ is said to be \textit{optimal} at $(t,x)$ if $W(t,x)=\int_{t}^{\infty}\scr L(s,\xi_u(s) ,u (s) )\,ds$. Recall that for a function $q\in L^1_{{\rm loc}}(t,\infty;\mathbb{R})$ the aforementioned integral $\int_{t}^\infty q (s)$ s defined by $\lim_{T \to \infty}\int_{t}^T q (s) \,ds$, provided this limit exists.
	
	We consider the following assumptions on $f_0,\,f_1,$ and $\scr L$:
	\begin{hypothesis}\label{ipotesi_H}[\cref{ipotesi_H}]
		\
		
		\begin{enumerate}[label=(\roman*)]
			
			\item\label{H_convessita_f_L_epigrafico} the set $\{(f_1(s,x)u,\scr L(s,x,u)+r)\,:\,u\in \bb R^m, r\mageq 0 \}$ is closed and convex for all $s\in \rzero$, $x\in\mathbb{R}^{n}$;
			
			\item\label{H_Lip_f_L} there exists $ k\in L^1_{\ttnn{loc}}(\rpiu ;\mathbb{R}^+)$ such that $f_0(s,\cdot)$, $f_1(s,\cdot)$, and $\scr L(s,\cdot,u)$ are $k\rs$-Lipschitz continuous for a.e. $s\in \rzero$ and uniformly for all $u\in \bb R^m $;
			
			\item\label{H_integrabilita_L2_f_L_nel_tempo} given any $r>0$, there exists $\theta_r\in L^2_{\ttnn{loc}}(\rpiu;\rpiu)$ such that $|f_0(s,x)|+\norm{f_1(s,x)}\mineq \theta_r(s)$ for a.e. $s\in \rzero$ and all $x\in B(0,r)$;
			
			\item\label{H_L_mageq_u_piu_psi} there exists a function $\phi\in L^1_{\ttnn{loc}}(\rpiu;\rpiu)$ such that $\scr L(s,x,u)\mageq |u|^2-\phi(s)$ for a.e. $s\in \rzero$ and for all $x\in \mathbb{R}^n$, $u\in \bb R^m $.
			
		\end{enumerate}
	\end{hypothesis}
	
	%

	\begin{proposition}\label{proplscW}
		Assume \cref{ipotesi_H}. Then for all $(t,x)\in \ttnn{dom } W$ there exists an optimal control for $W$ at $(t,x)$ and $W$ is lower semicontinuous.
	\end{proposition}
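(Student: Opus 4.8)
The plan is to prove existence of an optimal control by the direct method, and then to obtain the lower semicontinuity of $W$ from the same compactness/lower-closure argument applied along a converging sequence of initial data. First fix $(t,x)\in\ttnn{dom }W$. Since $\scr L\mageq 0$, the integral $\int_t^\infty\scr L$ is a monotone limit, hence always defined in $[0,\piuinf]$, and $0\mineq W(t,x)<\piuinf$; choose a minimizing sequence $u_j\in\ccal U(t,x)$ with $J(t,x,u_j)\ra W(t,x)$, so $J(t,x,u_j)\mineq C$ for $j$ large. By \cref{H_L_mageq_u_piu_psi}, for every $T>t$ and every measurable $E\subset[t,T]$, $\int_E\modulo{u_j\rs}^2\,ds\mineq\int_t^\infty\scr L(s,\xi_{u_j}\rs,u_j\rs)\,ds+\int_t^T\phi\rs\,ds\mineq C+\int_t^T\phi\rs\,ds$, so $\{u_j\}$ is bounded in $L^2(t,T;\bb R^m)$; moreover $\xi_{u_j}\ccd\subset\Omega$ gives $\modulo{\xi_{u_j}\rs}\mineq R\doteq\max_{y\in\Omega}\modulo{y}$, whence by \cref{H_integrabilita_L2_f_L_nel_tempo} and Cauchy--Schwarz $\int_E\modulo{\xi_{u_j}'\rs}\,ds\mineq\int_E\theta_R\rs\,ds+\norm{\theta_R}_{2,E}\,\sqrt{C+\int_t^T\phi\rs\,ds}$, which tends to $0$ as $\modulo{E}\ra 0$ uniformly in $j$ (since $\theta_R\in L^2(t,T;\rpiu)\subset L^1(t,T;\rpiu)$). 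Thus $\{\xi_{u_j}\ccd\}$ is equi-absolutely continuous and uniformly bounded on each compact subinterval, and by Arzel\`a--Ascoli with a diagonal argument, along a subsequence, $\xi_{u_j}\ccd\ra\xi\ccd$ locally uniformly on $[t,\piuinf)$, with $\xi(t)=x$ and $\xi\ccd\subset\Omega$ ($\Omega$ closed).

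Next introduce the extended orientor field $\hat G(s,y)\doteq\graffe{(f_0(s,y)+f_1(s,y)w,\;\scr L(s,y,w)+\rho)\,:\,w\in\bb R^m,\;\rho\mageq 0}$. By \cref{H_convessita_f_L_epigrafico} (shifted by $(f_0(s,y),0)$), $\hat G(s,\cdot)$ has closed convex values, and by \cref{H_Lip_f_L}--\cref{H_L_mageq_u_piu_psi} the map $y\rightsquigarrow\hat G(s,y)$ enjoys, for a.e.\ $s$, the upper-semicontinuity-type behaviour (Cesari's property (Q)) used below: \cref{H_Lip_f_L} gives continuity in $y$, while the coercivity bound $\modulo{w}^2\mineq\scr L(s,y,w)+\phi(s)$ confines the relevant controls. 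Since $(\xi_{u_j}'\rs,\scr L(s,\xi_{u_j}\rs,u_j\rs))\in\hat G(s,\xi_{u_j}\rs)$ a.e., $\xi_{u_j}\ccd\ra\xi\ccd$ locally uniformly, and $\liminf_j\int_t^\infty\scr L(s,\xi_{u_j}\rs,u_j\rs)\,ds=W(t,x)<\piuinf$, a lower-closure theorem of Cesari type---with the measurable selection furnished by \cref{measurableselection}---produces a control $\hat u\in\ccal U(t,x)$ whose trajectory is exactly $\xi\ccd$ and which satisfies $J(t,x,\hat u)=\int_t^\infty\scr L(s,\xi\rs,\hat u\rs)\,ds\mineq\liminf_j J(t,x,u_j)=W(t,x)$ (the passage $T\ra\piuinf$ inside the integral being legitimate as $\scr L\mageq 0$). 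Together with $J(t,x,\hat u)\mageq W(t,x)$ this forces equality, so $\hat u$ is optimal at $(t,x)$.

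For the lower semicontinuity, let $(t_n,x_n)\ra(t,x)$ in $\rzero\times\Omega$; we may assume $\liminf_n W(t_n,x_n)=\lim_n W(t_n,x_n)<\piuinf$ (otherwise the desired inequality is trivial, which also covers $(t,x)\notin\ttnn{dom }W$). Then $(t_n,x_n)\in\ttnn{dom }W$ and, by the previous step, there are optimal $u_n\in\ccal U(t_n,x_n)$ with $J(t_n,x_n,u_n)=W(t_n,x_n)\mineq C$. After a routine reduction accommodating the varying initial time $t_n\ra t$ (restricting, respectively constantly extending, the trajectories to a common subinterval), the estimates and the lower closure above again yield a feasible pair $(\xi\ccd,\hat u\ccd)$ at $(t,x)$ with $J(t,x,\hat u)\mineq\liminf_n J(t_n,x_n,u_n)=\liminf_n W(t_n,x_n)$; hence $W(t,x)\mineq J(t,x,\hat u)\mineq\liminf_n W(t_n,x_n)$, as required.

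The main obstacle is the lower-closure step. No time-integrable upper bound on $\scr L$ is assumed, so the running costs $\scr L(\cdot,\xi_{u_j}\ccd,u_j\ccd)$ need not be uniformly integrable and one cannot simply pass to a weak $L^1$ limit; also \cref{H_convessita_f_L_epigrafico} gives convexity of the velocity--cost set $\hat G(s,\cdot)$ but not of $\scr L(s,y,\cdot)$ by itself, so a naive weak lower semicontinuity theorem for integral functionals does not apply. The remedy is precisely the differential-inclusion form of the lower-closure theorem, which requires only the closedness and convexity of $\hat G(s,\cdot)$ together with a property-(Q)-type dependence on $y$; verifying that property from \cref{H_Lip_f_L}--\cref{H_L_mageq_u_piu_psi}, and carrying out the $t_n\ra t$ reduction in the last step, is the residual technical work.
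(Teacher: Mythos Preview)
Your proposal is correct and follows essentially the same strategy as the paper: direct method, $L^2$ bound on controls from \cref{ipotesi_H}-\ref{H_L_mageq_u_piu_psi}, equicontinuity of trajectories from \cref{ipotesi_H}-\ref{H_integrabilita_L2_f_L_nel_tempo} plus H\"older, Ascoli--Arzel\`a with a diagonal argument, and then a lower-closure step through the convex orientor set of \cref{ipotesi_H}-\ref{H_convessita_f_L_epigrafico} followed by a measurable selection; the lower semicontinuity is obtained by rerunning the argument along $(t_n,x_n)\to(t,x)$.

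The only noteworthy difference is in how the lower-closure step is executed. You package it as an appeal to a Cesari-type theorem under property~(Q), leaving the verification of~(Q) as ``residual technical work''. The paper instead carries this step out by hand: it applies Dunford--Pettis on each $[t,T]$ to extract weak $L^1$ limits $y$ of $\xi_{j_k}'$ and $\beta$ of $\scr L(\cdot,\xi_{j_k},u_{j_k})$, then uses the Lipschitz hypothesis \cref{ipotesi_H}-\ref{H_Lip_f_L} to trap $(\xi_{j_k}',\scr L(\cdot,\xi_{j_k},u_{j_k}))$ in $G(s,\bar\xi(s))+q(s)\varepsilon\bb B$ and invokes Mazur's theorem on the convex closed set $G(s,\bar\xi(s))+q(s)\varepsilon\bb B$ to conclude $(\bar\xi'(s),\beta(s))\in G(s,\bar\xi(s))$; finally \cref{measurableselection} produces $\bar u$ and $r\mageq 0$ with $\beta=\scr L(\cdot,\bar\xi,\bar u)+r$, giving $\int_t^\infty\scr L(s,\bar\xi,\bar u)\,ds\mineq\int_t^\infty\beta\mineq W(t,x)$. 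This is exactly the content of the Cesari closure you cite, just made explicit; in particular the Lipschitz estimate in \cref{ipotesi_H}-\ref{H_Lip_f_L} is what delivers the property-(Q) behaviour you allude to. Your remark that the costs need not be uniformly integrable is well taken and is precisely why the paper routes the argument through the pair $(\xi',\scr L)$ and Mazur rather than through weak lower semicontinuity of the cost functional alone.
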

	\begin{proof}
		Let $ \left(t , x  \right) \in \operatorname { dom } W $, $\graffe{u_j}_{j\in \bb N}\subset \ccal U(t,x)$ a minimizing sequence for $W(t,x)$, and denote by $\xi_j$ the trajectory starting from $x$ and associated to the control $u_j$. We notice that, since $\Omega$ is compact, the set $\graffe{\xi_{ j }}_{j\in \bb N}$ is equibounded. Moreover, by \cref{ipotesi_H}-\ref{H_L_mageq_u_piu_psi}, for any $T>t$ we have $\norm{u_j}_{2,(t,T)}\mineq \int_t^T \scr L(s,\xi_j(s),u_j\rs)\,ds+\int_t^T \phi(s)\,ds$ for all $j\in \bb N$. So, $\{\norm{u_j}_{2,(t,T)}\}_{j\in \bb N}$ is bounded by a constant $C_T>0$. Then, putting $R=\max_{x\in \Omega}|x|$, by \cref{ipotesi_H}-\ref{H_integrabilita_L2_f_L_nel_tempo} and H\"{o}lder we get for all $T>t$, $t\mineq \tilde \tau<  \tau\mineq  T$, and all $j\in \bb N$,
		\equazioneref{equintegrabilita_equicontinuita}{
			|\xi_j(\tau)-\xi_j(\tilde \tau)|&\mineq \int_{\tilde \tau}^\tau|f(s,\xi_j\rs,u_j\rs)|\,ds\\
			&\mineq \int_{\tilde \tau}^\tau|f_0(s,\xi_j\rs)|\,ds +\int_{\tilde \tau}^\tau \norm{f_1(s,\xi_j\rs)}|u_j\rs|\,ds\\
			&\mineq \sqrt{\tau-\tilde \tau}\norm{\theta_R}_{2,(\tilde \tau,\tau)}+C_T\norm{\theta_R}_{2,(\tilde \tau,\tau)},
		}	
		so that $\{ \xi_j \}_{j\in \bb N}$ is equicontinuous.
		From Ascoli-Arzel\`{a}'s theorem and the closedness of $\Omega$, there exists a subsequence $\graffe{\xi _ { j_ k }}_{k\in \bb N}$ converging almost uniformly to a continuous function $\overline { \xi } : [t,\infty)\rightarrow  \bb R^n$ satisfying $\bar \xi\ccd\subset \Omega$. From \cref{equintegrabilita_equicontinuita} and applying the Dunford-Pettis theorem (\cite{brezis2010functional}), taking a subsequence and keeping the same notation, we have for some $y \in L^1_{\ttnn{loc}}(t,\infty;\bb R^n)$ and $\beta \in L^1_{\ttnn{loc}}(t,\infty;\rzero)$ that for all $T> t$: $ \xi _ { j_k } ^ { \prime } \rightharpoonup y$ in $L ^ { 1 } \left( t , T; \mathbb { R } ^ { n } \right)$ and $ \scr L(\cdot,\xi _ { j_k } ( \cdot ) , u _ { j_k }\ccd )\rightharpoonup \beta$ in $L ^ { 1 } \left( t , T; \rzero\right )$. Passing to the limit yields $\overline { \xi } ( s) = x + \int _ { t} ^ { s} y ( \tau) d \tau $ for all $s\mageq t$. So, $ {\bar \xi }$ is locally absolutely continuous and, applying the Lebesgue theorem, $\overline { \xi } ^ { \prime } ( s) = y ( s )$ for a.e. $s\in [t,T]$. Moreover, since $\scr L\mageq 0$, for any $T> t $ we have $	\int _ { t } ^ { \infty } \scr L \left( s , \xi _ { j_k } ( s) , u _ { j_k } ( s) \right) d s \geq \int _ { t } ^ { T} \scr L \left( s , \xi _ { j_k } ( s) , u _ { j_k } ( s ) \right) d s$ for all $k\in \bb N$. Taking the limit as $k\ra \infty$, it follows that $W \left( t , x \right) \geq \int _ { t  } ^ { T} \beta ( s) d s$. By arbitrariness of $T>t$, we deduce $W \left( t , x  \right) \geq \int _ { t } ^ { \infty } \beta ( s ) d s$. Now, we show that there exist a measurable control $\bar u\ccd$ and a measurable function $r :[t,\infty)\rightarrow \bb { R } ^{ + }$ such that $\bar \xi\ccd$ and $\beta\ccd$ satisfy
		\equazioneref{stima1_proplscW}{
			\overline { \xi } ^ { \prime } ( s ) = f ( s, \overline { \xi } ( s ) , \overline { u } ( s ) ),\quad \beta ( s ) =\scr L ( s , \overline { \xi } ( s ) , \overline { u } ( s ) ) + r ( s),
		}
		for a.e. $s\mageq t.$
		Denote by $G:\bb R\times \bb R^n\rightsquigarrow \bb R^n\times \bb R$ the set-valued map defined by 
		\eee{
			G(s,x)=\{(f(s,x,u),\scr L(s,x,u)+r)\,:\,u\in \bb R^m, r\mageq 0 \}.
		}
		From assumption \cref{ipotesi_H}-\ref{H_Lip_f_L}, we can assume that for any $T>t$ there exists $q\in L^1(t,T;\rzero)$ such that for a.e. $s\in[t,T]$
		\eee{
			\left( \xi _ { j_k } ^ { \prime } ( s ) , \scr L \left( s, \xi _ { j_k } ( s ) , u _ { j_k } ( s) \right) \right) \in G \left( s , \xi _ { j_k } ( s ) \right)\subset G ( s , \overline { \xi } ( s ) ) +  q ( s )| \xi _ { j_k } ( s ) - \overline { \xi} ( s)| \bb  B.
		} 
		Let $\varepsilon > 0$, then there exists $k_\varepsilon\in \bb N$ such that $( \xi _ { j_k } ^ { \prime } ( s) , \scr L \left( s , \xi _ { j_k } ( s ) , u _ { j_k } ( s) \right) ) \in G ( s , \overline { \xi } ( s ) ) +  q ( s ) \varepsilon\bb B$ for a.e. $s\in[t,T]$ and all $k\mageq k_\varepsilon$. We notice that, by \cref{ipotesi_H}-\ref{H_convessita_f_L_epigrafico}, $G ( s, \overline { \xi } ( s) ) +  q( s) \varepsilon  \bb  B$ is closed and convex. Hence, applying Mazur's theorem (\cite{brezis2010functional}), we deduce that $( \overline { \xi } ^ { \prime } ( s ) , \beta ( s) ) \in G ( s , \overline { \xi } ( s ) ) +  q  ( s ) \varepsilon\bb B$ for a.e. $s\in[t,T]$. Since $\varepsilon$ is arbitrary, $( \overline { \xi } ^ { \prime } ( s ) , \beta ( s ) ) \in G (s , \overline { \xi } ( s ) )$ for a.e. $s\in[t,T]$ and therefore $( \overline { \xi } ^ { \prime } ( s ) , \beta ( s ) ) \in G ( s , \overline { \xi } ( s ) )$	for a.e. $s\mageq t$. Now, from the measurable selection theorem, there exist a control $\overline { u } ( \cdot )$ and a measurable function $r :[t,\infty)\rightarrow \bb { R } ^{ + }$ satisfy-ing \cref{stima1_proplscW}. Notice that $\bar u\in \ccal U(t,x)$. Thus, from \cref{stima1_proplscW}, $W\left( t , x  \right) \geq \int _ { t } ^ { \infty } \scr L ( s , \overline { \xi} ( s ) , \overline { u } ( s ) ) d s$, and, finally, $( \overline { \xi } , \overline { u } )$ is optimal at $(t,x)$.
		
		Now, we prove the lower semicontinuity of $W$. Consider $\graffe{(t_j,x_j)}_{j\in \bb N}$ converging to $(t,x)$ in $\ttnn{dom }W$ and denote by $u_j\in \ccal U(t_j,x_j)$ the minimizers. Keeping the same notation as above, we may conclude that there exists a subsequence $\graffe{\xi _ { j_ k }}_{k\in \bb N}$ converging almost uniformly to an absolutely continuous function $\overline { \xi } : [t,\infty)\rightarrow \Omega$ such that $\xi_{j_k}(t_{j_k})=x_{j_k}\ra \bar \xi(t)=x$ as $k\ra \infty$ and $\beta \in L^1_{\ttnn{loc}}(t,\infty;\rzero)$ satisfying $	\liminf_k W ( t_{j_k} ,x_{j_k}) \geq \int _ { t } ^ { \infty } \beta ( s ) d s$. Then the lower semicontinuity follows arguing as in the first part and the proof is complete.
	\end{proof}

	\begin{proposition}\label{viab_tramite_ipc}
		Assume \cref{ipotesi_H} and
		\equazioneref{ipc_base}{
			\{f(s,y,u)\,:\,u\in \bb R^m\}\cap \ttnn{int } T_\Omega(y)\neq \emptyset
		}
		for a.e. $s \in \rzero$ and all $y\in  \partial \Omega$. Then $\ccal U(t,x)\neq \emptyset$ for any $(t,x)\in  \rzero \times \Omega$.
	\end{proposition}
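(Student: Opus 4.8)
The plan is to construct a single feasible trajectory--control pair on $[t,\infty)$ by solving, via \Cref{viability}, a measurable viability problem on each of the intervals $[t+j,t+j+1]$, $j\in\bb N$, and then concatenating the pieces. Since \Cref{viability} requires a velocity map with closed convex values dominated by an $L^1$ function, the preliminary step is to replace the unbounded set $\{f(s,y,u):u\in\bb R^m\}$ by a bounded one while preserving the inward pointing property \rif{ipc_base}.

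\emph{Reduction to bounded controls.} I would first show, using the inward pointing condition \rif{ipc_base}, the compactness of $\partial\Omega$, the continuity of $f(s,\cdot,\cdot)$ granted by \cref{ipotesi_H}-\ref{H_Lip_f_L}, and the upper semicontinuity of $y\rightsquigarrow N^1_\Omega(y)$ recalled in \Cref{preliminaries}, that there exists a measurable $\rho\in L^2_{\ttnn{loc}}(\rzero;\rzero)$ such that, for a.e. $s\in\rzero$,
\eee{
\{f(s,y,u)\,:\,|u|\leq\rho(s)\}\cap T_\Omega(y)\neq\emptyset\quad\ttnn{for all }y\in\partial\Omega.
}
Then, with $R\doteq\max_{x\in\Omega}|x|$, define $F:\rzero\times\bb R^n\rightsquigarrow\bb R^n$ by $F(s,x)\doteq f_0(s,x)+f_1(s,x)\rho(s)\bb B$. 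By construction $F$ has compact convex values (affine images of a ball); $F(s,\cdot)$ is Lipschitz, hence continuous, for a.e. $s$ by \cref{ipotesi_H}-\ref{H_Lip_f_L}; $F(\cdot,x)$ is measurable; and by \cref{ipotesi_H}-\ref{H_integrabilita_L2_f_L_nel_tempo} together with H\"older, $\sup_{v\in F(s,x)}|v|\leq\theta_R(s)(1+\rho(s))=:\theta(s)$ for all $x\in\Omega$, with $\theta\in L^1_{\ttnn{loc}}(\rzero;\rzero)$. Applying \Cref{remark_extension_set_valued_map} to $F$ yields an extension $F_*$ that is globally $L^1$--bounded on every $[t,T]$ and whose trajectories starting in $\Omega$ coincide with those of $F$.

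\emph{Viability, concatenation, and recovery of the control.} Fix $(t,x)\in\rzero\times\Omega$. On $[t,t+1]$ the map $F_*$ satisfies all hypotheses of \Cref{viability} with $K=\Omega$, and the viability condition $F(s,y)\cap T_\Omega(y)\neq\emptyset$ for a.e. $s$ and all $y\in\partial\Omega$ is precisely the displayed property above; hence there is an absolutely continuous $\xi_0:[t,t+1]\ra\Omega$ with $\xi_0(t)=x$ and, by \Cref{remark_extension_set_valued_map}, $\xi_0'(s)\in F(s,\xi_0(s))$ a.e. Iterating on $[t+1,t+2]$ from $\xi_0(t+1)\in\Omega$, and so on, and concatenating, produces a locally absolutely continuous $\xi:[t,\infty)\ra\Omega$ with $\xi(t)=x$ and $\xi'(s)\in f_0(s,\xi(s))+f_1(s,\xi(s))\rho(s)\bb B$ for a.e. $s\geq t$. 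Finally, for a.e. $s$ the set $S(s)\doteq\{u\in\rho(s)\bb B:f_1(s,\xi(s))u=\xi'(s)-f_0(s,\xi(s))\}$ is nonempty and closed and $s\rightsquigarrow S(s)$ is measurable, so by \Cref{measurableselection} it admits a measurable selection $u(\cdot)$; then $(\xi,u)$ satisfies \rif{sistemacontrollo} and, $f(s,\cdot,u(s))$ being Lipschitz with locally integrable constant, $\xi$ is the trajectory associated with $u$ from $x$ at $t$. Thus $u\in\ccal U(t,x)$, so $\ccal U(t,x)\neq\emptyset$ (note that $J(t,x,u)$ may still equal $+\infty$, consistently with the statement).

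\emph{Main obstacle.} The only nontrivial point is the reduction to bounded controls: upgrading the pointwise--in--$y$ condition \rif{ipc_base} to a velocity bound $\rho(s)$ that is uniform over the compact set $\partial\Omega$ and measurable with local square--integrability in $s$. This is where the geometry of $\Omega$ (via $T_\Omega$ and the upper semicontinuity of $N^1_\Omega$) and the continuity of $f$ must be combined with care; everything afterwards is a routine application of \Cref{viability}, \Cref{remark_extension_set_valued_map}, and \Cref{measurableselection}.
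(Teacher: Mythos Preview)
Your proposal is correct and follows essentially the same route as the paper: reduce to a bounded control set so that the velocity map has compact convex values, then invoke \Cref{viability} (via \Cref{remark_extension_set_valued_map}) on successive unit intervals, concatenate, and recover a control by \Cref{measurableselection}. The paper's proof differs only in that it asserts, from the compactness of $\partial\Omega$ and the upper semicontinuity of $y\rightsquigarrow N^1_\Omega(y)$, a \emph{constant} bound $\delta>0$ on the controls, uniform in both $s$ and $y$, whereas you more cautiously allow a time--dependent bound $\rho(s)\in L^2_{\ttnn{loc}}$; your version is the safer formulation and still suffices for the $L^1$ domination needed in \Cref{viability}.
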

	\begin{proof}
		Notice that, from \cref{ipc_base}, for a.e. $s\mageq 0$ and all $y\in \bb R^n$, there exists $\delta_{s,y}>0$ such that $\{f(s,y,u)\,:$ $|u|\mineq \delta_{s,y}\}\cap \ttnn{int }T_\Omega(y)\neq \emptyset$. From assumption \cref{ipotesi_H}-\ref{H_Lip_f_L} and the compactness of $\Omega$, the set-valued map $y\rightsquigarrow N_\Omega^1(y)$ is upper semicontinuous.  Using a compactness argument, we can find $\delta>0$ such that $F(s,y)\,\cap\, T_\Omega(y)\neq \emptyset$ for a.e. $s\mageq 0$ and all $y\in \partial \Omega$, where we defined $F(s,y)\doteq \{f(s,y,u)\,:\,|u|\mineq \delta\}$. Now, fix $(t,x)\in \rzero\times \Omega$. From \cref{ipotesi_H}-\ref{H_Lip_f_L} and \cref{remark_extension_set_valued_map}, applying \cref{viability} and the measurable selection theorem  on the time interval $[t,t+1]$ to the set-valued map $F$, there exist $u^0\ccd$ and $\xi^0\ccd$ feasible solving \cref{sistemacontrollo} on $[t,t+1]$ with $\xi^0(t)=x$. Using again \cref{viability} and \cref{remark_extension_set_valued_map} on the time interval $[t+1,t+2]$, keeping as initial state $\xi^0(t+1)\in \Omega$, we have that there exist a control $u^1\ccd$ and $\xi^1\ccd$ feasible solving \cref{sistemacontrollo} on $[t+1,t+2]$ and starting from $\xi^0(t+1)$. So, we may conclude that for all $j$ there exist $u^j\ccd$ and $\xi^j\ccd$ solving \cref{sistemacontrollo} on $[t+j,t+j+1]$ and $\xi^{j+1}(t+j+1)=\xi^j(t+j+1)$. Thus, the conclusion follows now considering the feasible trajectory, starting from $x$ at time $t$, defined by $\xi(s)\doteq \xi^j(s)$ if $s\in [t+j,t+j+1]$.
	\end{proof}
	
	\begin{remark}\label{remark1}
		\Cref{viab_tramite_ipc} ensures the existence of feasible trajectories under the condition \cref{ipc_base}, which is referred to as an \textit{inward pointing condition} (i.p.c.). The (i.p.c.) has been extended to less restrictive frameworks (\cite{bascofrankowska2018lipschitz,Soner86a}). Such an assumption requires, roughly speaking, that at each point on the boundary of the constraint set $\partial\Omega$ there exists an admissible velocity pointing into its interior. Furthermore, \cref{ipotesi_H}-\ref{H_convessita_f_L_epigrafico} cannot be weakened by assuming the convexity of the set $\{(f(s,x,u),$ $\scr L(s,x,u))\,:\,u\in \bb R^m \}$ since, in many applications, the Lagrangian is not affine in the control.
	\end{remark}
	
	In the following we assume that $\scr L$ is a marginal function, i.e.,
	\eee{
		\scr L(s,\xi,u)=\sup_{\alpha\mageq 0}\,\ell  (s,\xi,u,\alpha),
	}
	where $\ell  (s,\xi,u,\alpha)=\ell_1 (s,\xi,\alpha)+\ell_0(s,u)$ with $\ell_0 :\bb R\times  \bb R^m \ra \bb R,\,\ell_1 :\bb R\times \bb R^n\times \bb R \ra \bb R$ fun-ctions measurable in time. For any $\alpha\in \ccal A$ we define the value function $W^\alpha:\rzero\times \Omega\ra \bb R\cup \graffe{\pm \infty}$ of the auxiliary control problem
	\equazioneref{W_alpha}{
		W^\alpha(t,x)\doteq \inf_{u\in \ccal U(t,x)}J_\alpha(t,x,u)
	}
	where $J_\alpha(t,x,u)\doteq  \int_{t}^{\infty}\ell  (s,\xi_u(s) ,u (s),\alpha(s) )\,ds$ and $\ccal U(t,x)$ is as in \Cref{def_feasible}.

	\begin{hypothesis}\label{ipotesi_H_primo}[\cref{ipotesi_H_primo}] 
		\
		
		\begin{enumerate}[label=(\roman*)]
			
			\item\label{H_primo_i} \cref{ipotesi_H} holds with $k\in L^1(\rpiu;\rpiu)$ and $\phi\in L^2(\rpiu;\rpiu)$;
			
			\item\label{H_primo_lip_l_1} there exists $k_1\in L^2(\rpiu;\rpiu)$ such that $|\ell_1(s,x,\alpha)-\ell_1(s,x,\hat \alpha)|\mineq k_1(s)|\alpha-\hat \alpha|$ for a.e. $s\in \rzero$ and all $x\in \Omega$, $\alpha,\hat \alpha\in \rzero$;
			
			\item\label{H_primo_Lambda_Lip} there exists $\psi\in L^2(\rpiu;\rpiu)$ such that
			\equazioneref{set_valued_map_Lambda}{
				x\rightsquigarrow\Lambda(s,x)\doteq \{\alpha\mageq 0\,:\,\sup_{\beta\mageq 0} \ell_1(s,x,\beta)= \ell_1 (s,x,\alpha)\}
			}
			is $\psi(s)$-Lipschitz for all $s\in \rzero$;
			
			\item\label{H_primo_Frechet_diff_J} $J(s,x,\cdot)$ and $J_\alpha(s,x,\cdot)$ are Fr\'{e}chet differentiable on $L^2(s,\infty;\bb R^m)$ for all $\alpha\in \ccal A$, $s\in \rzero$, and $x\in \Omega$.
		\end{enumerate}
	\end{hypothesis}

	\begin{lemma}\label{Lemmachiave}
		Assume \cref{ipotesi_H_primo}-\ttnn{\ref{H_primo_i}-\ref{H_primo_lip_l_1}} and that $\Lambda(s,\cdot)$ takes closed nonempty values for all $s\in \rzero$. Then for all $(t,x)\in \rzero\times\bb R^n$:
		\enurom{
			\item $W(t,x) = \inf\{J(t,x,u)\,:\,u\in \ccal U(t,x)\cap L^2(t,\infty;\bb R^m)\} $;
			
			\item for any $u \in \ccal U(t,x)$,
			\equazioneref{tesilemma1}{
				J(t,x,u)&=\sup_{\alpha\in \ccal A}\int_{t}^{\infty}\ell  (s,\xi_u(s) ,u (s),\alpha(s) )\,ds;
			}
			
			\item\label{lemma_statement_alpha_delta_meas_selection} if $u \in \ccal U(t,x)$ and $\alpha^u\ccd\in \Lambda(\cdot,\xi_u\ccd)$ is a Lebesgue measurable selection on $[t,\infty)$, then the supremum in \cref{tesilemma1} is attained for $\alpha^u$ whenever $J(t,x,u)<\infty$.
		}
		
	\end{lemma}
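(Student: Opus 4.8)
The plan is to prove the three assertions essentially independently. Item~(i) will follow from the coercivity estimate in \cref{ipotesi_H}-\ref{H_L_mageq_u_piu_psi}; item~(ii) is the substantive one, and I would obtain it from a measurable selection of \emph{near}-maximizers of $\ell_1(s,\xi_u\rs,\cdot)$ carrying an approximation error that is integrable on $[t,\infty)$; item~(iii) is then an immediate consequence of~(ii).

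For~(i), the inclusion $\ccal U(t,x)\cap L^2(t,\infty;\bb R^m)\subseteq \ccal U(t,x)$ already gives $W(t,x)\mineq{\inf\graffe{J(t,x,u)\,:\,u\in \ccal U(t,x)\cap L^2(t,\infty;\bb R^m)}}$. For the converse, if $W(t,x)=\piuinf$ then both sides equal $\piuinf$; otherwise I would take a minimizing sequence $\graffe{u_j}\subseteq \ccal U(t,x)$ with $J(t,x,u_j)<\piuinf$ and use that \cref{ipotesi_H}-\ref{H_L_mageq_u_piu_psi} yields $\modulo{u_j\rs}^2\mineq{\scr L(s,\xi_{u_j}\rs,u_j\rs)+\phi(s)}$ for a.e.\ $s\mageq t$; integrating over $[t,\infty)$ and invoking $\scr L\mageq 0$ together with the integrability of $\phi$ (\cref{ipotesi_H_primo}-\ref{H_primo_i}) forces $u_j\in L^2(t,\infty;\bb R^m)$, so $u_j\in \ccal U(t,x)\cap L^2(t,\infty;\bb R^m)$ and $J(t,x,u_j)\ra W(t,x)$ yields the reverse inequality.

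For~(ii), fix $u\in \ccal U(t,x)$ and put $\Phi_u(s)\doteq \sup_{\beta\mageq 0}\ell_1(s,\xi_u\rs,\beta)$, so that $\scr L(s,\xi_u\rs,u\rs)=\Phi_u(s)+\ell_0(s,u\rs)$ --- in particular $\Phi_u(s)$ is finite a.e., since $\scr L$ is real valued --- and $\ell(s,\xi_u\rs,u\rs,\alpha)=\ell_1(s,\xi_u\rs,\alpha)+\ell_0(s,u\rs)\mineq{\scr L(s,\xi_u\rs,u\rs)}$ for every $\alpha\mageq 0$ and a.e.\ $s$. Since $\scr L\mageq 0$, the maps $T\mapsto\int_t^T\scr L(s,\xi_u\rs,u\rs)\,ds$ are nondecreasing with limit $J(t,x,u)$, whence $\int_t^T\ell(s,\xi_u\rs,u\rs,\alpha\rs)\,ds\mineq{\int_t^T\scr L(s,\xi_u\rs,u\rs)\,ds}\mineq{J(t,x,u)}$ for all $\alpha\in \ccal A$ and $T\mageq t$, and letting $T\ra\infty$ and taking the supremum over $\alpha$ gives $J(t,x,u)\mageq{\sup_{\alpha\in \ccal A}\int_t^\infty\ell(s,\xi_u\rs,u\rs,\alpha\rs)\,ds}$. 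For the reverse inequality, I would fix $\eps>0$ and a measurable weight $\rho:[t,\infty)\ra(0,\infty)$ with $\int_t^\infty\rho\rs\,ds\mineq{\eps}$ (for instance $\rho\rs=\eps e^{t-s}$), and consider the set-valued map $s\rightsquigarrow\Gamma(s)\doteq\graffe{\alpha\mageq 0\,:\,\ell_1(s,\xi_u\rs,\alpha)\mageq{\Phi_u(s)-\rho(s)}}$. Since $\ell_1$ is measurable in time and $k_1(s)$-Lipschitz in $\alpha$ (\cref{ipotesi_H_primo}-\ref{H_primo_lip_l_1}), the function $(s,\alpha)\mapsto\ell_1(s,\xi_u\rs,\alpha)$ is jointly measurable and $\Phi_u$ is measurable, so $\Gamma$ is a measurable set-valued map with closed nonempty values and, by \cref{measurableselection}, it admits a measurable selection $\alpha^u_\eps\in \ccal A$. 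Then $0\mineq{\scr L(s,\xi_u\rs,u\rs)-\ell(s,\xi_u\rs,u\rs,\alpha^u_\eps\rs)}\mineq{\rho(s)}$ a.e., so this difference lies in $L^1(t,\infty;\rzero)$ with integral $\mineq{\eps}$; hence, if $J(t,x,u)<\piuinf$ then $\int_t^\infty\ell(s,\xi_u\rs,u\rs,\alpha^u_\eps\rs)\,ds\mageq{J(t,x,u)-\eps}$, while if $J(t,x,u)=\piuinf$ then $\int_t^T\ell(s,\xi_u\rs,u\rs,\alpha^u_\eps\rs)\,ds\mageq{\int_t^T\scr L(s,\xi_u\rs,u\rs)\,ds-\eps}\ra\piuinf$, so that $\int_t^\infty\ell(s,\xi_u\rs,u\rs,\alpha^u_\eps\rs)\,ds=\piuinf$. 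In either case $\sup_{\alpha\in \ccal A}\int_t^\infty\ell(s,\xi_u\rs,u\rs,\alpha\rs)\,ds\mageq{J(t,x,u)}$, which together with the previous inequality proves \cref{tesilemma1}.

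For~(iii), if $\alpha^u\ccd\in\Lambda(\cdot,\xi_u\ccd)$ is a measurable selection on $[t,\infty)$, then $\ell_1(s,\xi_u\rs,\alpha^u\rs)=\Phi_u(s)$ for a.e.\ $s$, hence $\ell(s,\xi_u\rs,u\rs,\alpha^u\rs)=\scr L(s,\xi_u\rs,u\rs)$ a.e.; since $\scr L\mageq 0$ and $J(t,x,u)<\piuinf$, the integral $\int_t^\infty\ell(s,\xi_u\rs,u\rs,\alpha^u\rs)\,ds$ is well defined and equals $J(t,x,u)$, which by~(ii) is the supremum in \cref{tesilemma1}; thus that supremum is attained at $\alpha^u$. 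I expect the only genuine difficulty to be in~(ii): the supremum ranges over functions on an infinite horizon, so the near-maximizer must be chosen at once measurably and with an error that is summable on $[t,\infty)$ --- which is why a weight $\rho$, rather than a constant tolerance, is used --- and one must keep track of the convention for $\int_t^\infty$ when the running cost fails to be summable.
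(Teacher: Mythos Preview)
Your proof is correct, and parts~(i) and~(iii) match the paper's argument essentially verbatim. For part~(ii) you take a slightly different route: the paper applies the measurable selection theorem directly to the set-valued map $s\rightsquigarrow\Lambda(s,\xi_u\rs)$ --- which is closed and nonempty by hypothesis --- to obtain an \emph{exact} maximizer $\tilde\alpha$ satisfying $\scr L(s,\xi_u\rs,u\rs)=\ell(s,\xi_u\rs,u\rs,\tilde\alpha\rs)$ a.e., and then uses monotonicity of $T\mapsto\int_t^T\scr L$ to pass to the limit. You instead construct $\eps$-near maximizers via a selection from the superlevel set $\Gamma(s)=\{\alpha:\ell_1(s,\xi_u\rs,\alpha)\mageq\Phi_u(s)-\rho(s)\}$ with a summable tolerance $\rho$. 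Your approach is marginally more general in that it would go through even if $\Lambda$ were empty (only finiteness of $\Phi_u$ is needed for $\Gamma\neq\emptyset$), but given the standing hypothesis that $\Lambda(s,\cdot)$ has closed nonempty values, the paper's direct selection from $\Lambda$ is shorter and avoids the $\eps$-bookkeeping. Both arguments rely on the same measurable-selection machinery and the same monotone-limit device to handle the infinite horizon.
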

	\begin{proof}
		Fix $(t,x)\in \rzero\times\bb R^n$.
		
		The statement $(i)$ follows from the assumption \cref{ipotesi_H}-\ref{H_L_mageq_u_piu_psi}.
		
		Next we prove $(ii)$. Let $u\in \ccal U(t,x)$. We claim that
		\equazioneref{claim1}{
			\int_{t}^{\infty}\scr L(s,\xi_u(s) ,u (s) )\,ds\mageq \sup_{\alpha\in \ccal A}\int_{t}^{\infty}\ell  (s,\xi_u(s) ,u (s),\alpha(s) )\,ds.
		}
		If $\int_{t}^{\infty}\scr L(s,\xi_u(s) ,u (s) )\,ds=+\infty$, then the claim follows. Otherwise, since for all $s\mageq 0, \xi\in \bb R^n, u\in \bb R^m,$ and $\alpha\mageq 0$ we have that $\scr L(s,\xi,u)\mageq \ell  (s,\xi,u,\alpha)$. Hence, for all $u \in \ccal U(t,x)$ and all  $\alpha\in \ccal A$ we have $ \int_{t}^{\infty}\scr L(s,\xi(s) ,u (s) )\,ds$ $\mageq \int_{t}^{\infty}\ell  (s,\xi(s) ,u (s),\alpha(s) )\,ds$, and the claim follows. We show next the inverse inequality in \cref{claim1}. Assume that the right-hand side of \cref{claim1} is finite. Fix $u\in \ccal U(t,x)$. Then for any $\alpha\in \ccal A$ the function $s\mapsto  \ell(s,\xi_u(s) ,u (s),\alpha(s) )$ is locally integrable on $[t,\infty)$. Since $s\rightsquigarrow \Lambda(s,\xi_u(s))$ has closed nonempty values, applying the measurable selection theorem, there exists a measurable function $\tilde \alpha:[t,\infty)\ra \rzero$ satisfying $\scr L(s,\xi_u(s) ,u (s) )=\ell  (s,\xi_u(s) ,u (s),\tilde \alpha(s) )$ for a.e. $s\mageq t$.
		Hence, since $\scr L\mageq 0$, for all $T>t$
		\equazioneref{lemma1_ultima}{
			\int_{t}^{T}\scr L(s,\xi_u(s) ,u (s) )\,ds&= \int_{t}^{T}\ell  (s,\xi_u(s) ,u (s),\tilde \alpha(s) )\,ds\\
			&\mineq \int_{t}^{\infty}\ell  (s,\xi_u(s) ,u (s),\tilde \alpha(s) )\,ds\\
			&\mineq \sup_{ \alpha\in \ccal A}\int_{t}^{\infty}\ell  (s,\xi_u(s) ,u (s), \alpha(s) )\,ds.
		}
		Thus, the proof of \cref{tesilemma1} is complete passing to the limit as $T\ra \infty$ in \cref{lemma1_ultima}.
		
		The last statement (iii) follows immediately from \cref{claim1} and \cref{lemma1_ultima}.
	\end{proof}

	The next result provide a two-player game formulation for the value function of the control problem \cref{funzionalecosto}-\cref{sistemacontrollo}.
	
	\begin{proposition}\label{prop_W_uguale_sup_W_alpha}
		Assume \cref{ipotesi_H_primo}. Let $\bar u\ccd$ be optimal at $(t,x)\in \ttnn{dom}\,W\neq \emptyset$ and $\bar \alpha\ccd\in \Lambda(\cdot,\xi_{\bar u}\ccd)$ be a measurable selection on $[t,\infty)$. Suppose that $\xi_{\bar u}\ccd\subset \ttnn{int }\Omega$, there exist $C>0$ and $\bar \delta>0$ satisfying $\forall (s,\delta,w)\in(t,\infty)\times (0,\bar \delta)\times B_{L^2(t,\infty;\bb R^m)}(0,1)$
	\equazioneref{tubo}{
	|\xi_{\bar u+\delta w}\rs-\xi_{\bar u}\rs|\mineq C\delta
	}
	with $\xi_{\bar u+\delta w}(t)=x$, and $J_{\bar \alpha}(t,x,\cdot)$ is strictly convex in a neighborhood of $\bar u$.  Then
		\equazioneref{tesi_theorem_prop_W_uguale_sup_W_alpha}{
			W(t,x)=\sup_{\alpha\in \ccal A} W^\alpha(t,x).
		}
		
	\end{proposition}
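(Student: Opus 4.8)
The plan is to prove the two inequalities separately. The inequality $W(t,x)\ge\sup_{\alpha\in\ccal A}W^\alpha(t,x)$ is elementary: for every $\alpha\in\ccal A$ and every $u\in\ccal U(t,x)$ the pointwise bound $\scr L(s,\xi_u\rs,u\rs)\ge\ell(s,\xi_u\rs,u\rs,\alpha\rs)$ gives $J(t,x,u)\ge J_\alpha(t,x,u)\ge W^\alpha(t,x)$, whence $W(t,x)\ge W^\alpha(t,x)$ and then, taking the supremum over $\alpha$, the claim. For the reverse inequality it is enough to show $W^{\bar\alpha}(t,x)=W(t,x)$, since then $W(t,x)=W^{\bar\alpha}(t,x)\le\sup_{\alpha\in\ccal A}W^\alpha(t,x)\le W(t,x)$. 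Since $\bar u$ is optimal, $J(t,x,\bar u)=W(t,x)<\infty$, and since $\bar\alpha\ccd\in\Lambda(\cdot,\xi_{\bar u}\ccd)$ is a measurable selection, \Cref{Lemmachiave} yields $W(t,x)=J(t,x,\bar u)=J_{\bar\alpha}(t,x,\bar u)$. Thus $W^{\bar\alpha}(t,x)\le J_{\bar\alpha}(t,x,\bar u)=W(t,x)$ automatically, and the whole matter reduces to proving that $\bar u$ minimises $J_{\bar\alpha}(t,x,\cdot)$ over $\ccal U(t,x)$.

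First I would show that $\bar u$ is a critical point of $J(t,x,\cdot)$ on $L^2(t,\infty;\bb R^m)$ (recall $\bar u\in L^2(t,\infty;\bb R^m)$, e.g. from the coercivity \cref{ipotesi_H}-\ref{H_L_mageq_u_piu_psi} and $W(t,x)<\infty$). Using $\xi_{\bar u}\ccd\subset\ttnn{int }\Omega$ together with the uniform tube estimate \cref{tubo}, one checks that for every $w$ in the unit ball of $L^2(t,\infty;\bb R^m)$ and every sufficiently small $\delta>0$ the perturbed trajectory $\xi_{\bar u+\delta w}\ccd$ again lies in $\Omega$, so $\bar u+\delta w\in\ccal U(t,x)$. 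Hence $\bar u$ is a local minimiser of $J(t,x,\cdot)$ over an open ball of $L^2(t,\infty;\bb R^m)$, and by the Fréchet differentiability in \cref{ipotesi_H_primo}-\ref{H_primo_Frechet_diff_J} its derivative there vanishes: $D_uJ(t,x,\bar u)=0$.

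Next I would transfer this information to $J_{\bar\alpha}$ by an envelope argument. The pointwise inequality $\scr L(s,\xi_u\rs,u\rs)\ge\ell(s,\xi_u\rs,u\rs,\bar\alpha\rs)$ gives $J(t,x,u)\ge J_{\bar\alpha}(t,x,u)$ for all $u\in L^2(t,\infty;\bb R^m)$, and equality holds at $u=\bar u$ by the identity just obtained. Therefore $u\mapsto J(t,x,u)-J_{\bar\alpha}(t,x,u)$ is a nonnegative, Fréchet differentiable functional on $L^2(t,\infty;\bb R^m)$ attaining its minimum at $\bar u$; consequently $D_uJ(t,x,\bar u)=D_uJ_{\bar\alpha}(t,x,\bar u)$, and so $D_uJ_{\bar\alpha}(t,x,\bar u)=0$. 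Combined with the strict convexity of $J_{\bar\alpha}(t,x,\cdot)$ in a neighbourhood of $\bar u$, this makes $\bar u$ a strict local minimiser of $J_{\bar\alpha}(t,x,\cdot)$; running once more the feasible-segment argument along $\lambda\mapsto\bar u+\lambda\delta w$, $\lambda\in[0,1]$ — which for $\delta$ small stays both feasible and inside the convexity neighbourhood, so that $g(\lambda)\doteq J_{\bar\alpha}(t,x,\bar u+\lambda\delta w)$ is convex with $g'(0)=0$ — gives $J_{\bar\alpha}(t,x,u)\ge J_{\bar\alpha}(t,x,\bar u)$ for all $u\in\ccal U(t,x)$. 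Hence $W^{\bar\alpha}(t,x)=J_{\bar\alpha}(t,x,\bar u)=W(t,x)$, completing the proof.

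I expect two delicate points. The first is the feasibility of small $L^2$-perturbations of $\bar u$: the conditions $\xi_{\bar u}\ccd\subset\ttnn{int }\Omega$ and the \emph{uniform} tube bound \cref{tubo} are exactly what is needed, but one must make sure the image of $\xi_{\bar u}\ccd$ does not approach $\partial\Omega$ as $s\to\infty$, so that a single $\delta$ works for all times. The second, and the real crux, is passing from ``$\bar u$ is a local minimiser of $J_{\bar\alpha}(t,x,\cdot)$'' to ``$\bar u$ is a global minimiser over $\ccal U(t,x)$''; here one genuinely uses convexity of $J_{\bar\alpha}(t,x,\cdot)$ — either by propagating the neighbourhood strict convexity along feasible segments $[\bar u,u]$ emanating from $\bar u$, or, alternatively, by producing an optimal control $u^\star$ for $W^{\bar\alpha}$ via the compactness argument of \Cref{proplscW} and then using $D_uJ_{\bar\alpha}(t,x,\bar u)=0$ together with convexity along $[\bar u,u^\star]$ to exclude $J_{\bar\alpha}(t,x,u^\star)<J_{\bar\alpha}(t,x,\bar u)$. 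The remaining manipulations — the pointwise bound $\scr L\ge\ell(\cdot,\cdot,\cdot,\bar\alpha\ccd)$, the derivative computations, and the reduction via \Cref{Lemmachiave} — are routine.
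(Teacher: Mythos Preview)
Your argument is correct and reaches the same conclusion as the paper, but the central step --- showing $D_u J_{\bar\alpha}(t,x,\bar u)=0$ --- is obtained by a genuinely different and more economical route. The paper expands $J_{\bar\alpha}(t,x,\bar u+\delta w)-J_{\bar\alpha}(t,x,\bar u)$ as $J(t,x,\bar u+\delta w)-J(t,x,\bar u)$ plus a correction $\int_t^\infty(\ell_1(s,\xi_{\bar u+\delta w},\bar\alpha)-\ell_1(s,\xi_{\bar u+\delta w},\alpha^\delta))\,ds$, with $\alpha^\delta\ccd\in\Lambda(\cdot,\xi_{\bar u+\delta w}\ccd)$, and controls this correction by $O(\delta)$ via the Lipschitz hypotheses \cref{ipotesi_H_primo}-\ref{H_primo_lip_l_1}, \cref{ipotesi_H_primo}-\ref{H_primo_Lambda_Lip} together with the tube estimate \cref{tubo}; dividing by $\delta$ and letting $\delta\to 0$ then gives $D_uJ_{\bar\alpha}(t,x,\bar u)(w)\ge D_uJ(t,x,\bar u)(w)$. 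Your envelope argument --- $J\ge J_{\bar\alpha}$ on $L^2$ with equality at $\bar u$, hence the Fr\'echet derivatives agree --- dispenses with this computation and in fact makes no use of \cref{ipotesi_H_primo}-\ref{H_primo_lip_l_1} or \cref{ipotesi_H_primo}-\ref{H_primo_Lambda_Lip} at this point, relying instead on the full strength of \cref{ipotesi_H_primo}-\ref{H_primo_Frechet_diff_J}. Both proofs then invoke the strict convexity of $J_{\bar\alpha}$ near $\bar u$ to pass from first-order stationarity to minimality; you are right to single out the local-to-global step as delicate --- the paper simply asserts that $D_uJ_{\bar\alpha}(t,x,\bar u)(w)\ge 0$ suffices for \cref{claim2} and does not elaborate, so on this issue your discussion is at least as explicit as the original.
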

	\begin{proof} Let $w\in \ccal U(t,x)$ and $\alpha\in \ccal A$. Applying \cref{Lemmachiave} we get
		\eee{
			\int_{t}^{\infty}\scr L(s,\xi_w(s) ,w (s) )\,ds&\mageq  \int_{t}^{\infty}\ell  (s,\xi_w(s) ,w (s),\alpha(s) )\,ds \\
			&\mageq \inf_{u\in \ccal U(t,x)} \int_{t}^{\infty}\ell  (s,\xi_u(s) ,u (s),\alpha(s) )\,ds,
		}	
		so $W(t,x)\mageq \sup_{\alpha\in \ccal A} W^\alpha(t,x)$.
		
		On the other hand, assume $\sup_{\alpha\in \ccal A} W^\alpha(t,x)<+\infty$. Let $ \bar \alpha\ccd\in \Lambda(\cdot,\xi_{\bar u}\ccd)$ be a mea-surable selection on $[t,\infty)$. From \cref{Lemmachiave}-\ref{lemma_statement_alpha_delta_meas_selection} it follows that \cref{tesilemma1} is satisfied along the pair $(\bar u\ccd,\bar \alpha\ccd)$. So, it is sufficient to show that
		\equazioneref{claim2}{
			\int_{t}^{\infty}\scr L(s,\xi_{\bar u}(s) ,\bar u (s) )\,ds\mineq \int_{t}^{\infty}\ell  (s,\xi_{w}(s) ,w (s),\bar \alpha(s) )ds,
		}
		for all $w\in \ccal U(t,x).$
		Fix $\eps>0$ and $w\in L^2(t,\infty;\bb R^m)$ with $\norm{w}_{2, (t,\infty)}=1$. From our assumptions, there exists $\delta_\eps\in (0,\eps)$ such that $J_{\bar \alpha}(t,x,\cdot)$ is Fr\'{e}chet differentiable and strictly convex on $\{u\in L^2(t,\infty;\bb R^m)\,:\,\norm{u-\bar u}_{2,(t,\infty)}\mineq \delta_\eps\}$.
		Since $\xi_{\bar u}\ccd\subset \ttnn{int }\Omega$ and from \rif{tubo}, replacing $\delta_\eps$ with a suitable small constant $\delta_\eps\in (0,\bar \delta)$, we have $\xi_{\bar u+\delta w}\ccd\subset \Omega$ for all $\delta\in (0,\delta_\eps)$. We may assume that $J(t,x,\bar u+\delta w)<\infty$ for any $\delta\in (0,\delta_\eps)$. In order to prove \cref{claim2}, it is sufficient to show that $D_u J_{\bar \alpha}(t,x,\bar u)( w)\mageq 0$, where $D_u$ stands for the Fr\'{e}chet derivative with respect to the variable $u$. For all $\delta\in (0,\delta_\eps)$ denote by $\alpha^\delta\ccd$ the measurable function satisfying the statement of \cref{Lemmachiave}-\ref{lemma_statement_alpha_delta_meas_selection}. We have
		\equazioneref{stimaparte1}{
			&J_{\bar \alpha}(t,x,\bar u+\delta w)-J_{\bar \alpha}(t,x,\bar u)\\
			&=\int_t^\infty (\ell_1(s,\xi_{\bar u+\delta w}\rs, \alpha^\delta\rs)+\ell_0(s,\xi_{\bar u+\delta w}\rs))\,ds\\
			&\qquad-{\int_t^\infty ( \ell_1(s,\xi_{\bar u}\rs, \bar \alpha\rs)+\ell_0(s,\xi_{\bar u}\rs))\,ds}\\
			&\qquad +\int_t^\infty (\ell_1(s,\xi_{\bar u+\delta w}\rs,\bar \alpha\rs) - \ell_1(s,\xi_{\bar u+\delta w}\rs, \alpha^\delta\rs))\,ds\\
			&=\int_t^\infty (\scr L(s,\xi_{\bar u+\delta w}\rs,\bar u\rs+\delta w\rs,\bar \alpha\rs) -\scr L(s,\xi_{\bar u}\rs,\bar u\rs,\bar \alpha\rs))ds\\
			&\qquad +\int_t^\infty (\ell_1(s,\xi_{\bar u+\delta w}\rs,\bar \alpha\rs)- \ell_1(s,\xi_{\bar u+\delta w}\rs, \alpha^\delta\rs))\,ds.
		}
		Now, from \cref{tubo} and assumption \cref{ipotesi_H_primo}-\ref{H_primo_Lambda_Lip}, it follows that there exists a small $\tilde \delta_\eps\in (0,\delta_\eps)$ such that $|\alpha^\delta\rs-\bar \alpha\rs|\mineq C\eps\psi(s)$ for a.e. $s\mageq t$ and all $\delta\in (0,\tilde \delta_\eps)$. So, by \cref{ipotesi_H_primo}-\ref{H_primo_lip_l_1}, we have that
		\eee{
			&\int_t^\infty|\ell_1(s,\xi_{\bar u+\delta w}\rs,\bar \alpha\rs)- \ell_1(s,\xi_{\bar u+\delta w}\rs, \alpha^\delta\rs)|\,ds\\
			&\mineq \eps C\int_t^\infty k_1(s)\psi(s)\,ds\mineq \eps C\norm{k_1}_{2,(t,\infty)}\norm{\psi}_{2,(t,\infty)} \doteq \eps \hat c.
		}
		From \cref{stimaparte1} we get	$J_{\bar \alpha}(t,x,\bar u+\delta w)-J_{\bar \alpha}(t,x,\bar u)\mageq J(t,x,\bar u+\delta w)-J(t,x,\bar u)-\hat c\eps$ for all $\delta\in (0,\tilde \delta_\eps).$
		Hence, dividing by $\delta$ and passing to the limit as $\delta\ra 0$,
		\eee{
			D_u J_{\bar \alpha}(t,x,\bar u)( w)\mageq D_u J(t,x,\bar u)( w)-\hat c \eps.
		}
		Since $\eps$ and $w$ are arbitrary, the proof is complete.
	\end{proof}
	\begin{remark}\label{remark_lip_cont}
The closeness of solutions estimate \rif{tubo} assumed in the statement of Proposition \ref{prop_W_uguale_sup_W_alpha} is satisfied for linear systems \rif{sistemacontrollo} with an asymptotically stable equilibrium point $x\in \ttnn{int }\Omega$ for input $u=0$ and $f_1$ totally bounded (cfr. \cite[Theorem 3.10]{khalil2002nonlinear}).
\end{remark}

	\section{Optimal synthesis}\label{optimal_synthesis}
	Let $h:\bb R^n\ra \bb R^n$ be a diffeomorphism, i.e., it is bijective and continuously differentiable with its inverse. In this section we give feedback laws for the optimal control problem \cref{W_alpha}, with dynamics and Lagrangian given by:  for all $s\in \rzero$, $x\in \bb R^n$, $u\in \bb R^m$, and $\alpha\mageq 0$
	\equazioneref{f_sezione_4}{
		&f_0(s,x)=\nabla h(x)^{-1}A\rs h(x),\quad f_1(s,x)=\nabla h(x)^{-1}B\rs\\
		&\ell  (s,\xi,u,\alpha)= \ps{h(\xi)}{Q(s,\alpha)h(\xi)} +\ps{u}{R u} -b(\alpha),
	}
	where $A:\bb R\ra \bb R^{n\times n}$, $B:\bb R\ra \bb R^{n\times m}$, $Q:\bb R\times \rzero \ra \bb R^{n\times n}$, $b:\bb R\ra \rzero$, and $R\in \bb R^{n\times n}$ are given.
	
	
	We consider  the following assumptions:
	\begin{hypothesis}\label{ipotesi_H_primo_primo}[\cref{ipotesi_H_primo_primo}]
		\begin{enumerate}[label=(\roman*)]
			\item $A, B,$ and $b$ are continuous;
			
			\item $R= \frac{1}{2} I_n$ and there exist $K\in L^1(\rpiu;\rpiu)$ and $a:\bb R\ra \rzero$ continuous such that $Q(s,\alpha)= (\frac{1}{2} K(s)+a(\alpha))I_n$.
		\end{enumerate}
	\end{hypothesis}
	%
	%
	
	%
	%
	%
	%
	%
	
	For any $\alpha\in \ccal A$ we denote by $P^\alpha_T\ccd$ the solution of the Riccati differential system
	\sistemaref{sistema_di_Riccati}{
		-{P}'=A^\star P+P A- P B R^{-1}B^\star P+Q^\alpha  & \ttnn{a.e. on }[t,T]\\
		P(T)=0,
	}
	where we put $Q^{\alpha}(s)\doteq Q(s,\alpha(s))$.
	
	The following result is well known ( \cite[Chapter 1, part 3]{DaPrato:2006:RepresentationControl} and \cite[Chapter 3]{andersonmoore1971linearoptimalcontrol}).
	\begin{lemma}[\cite{DaPrato:2006:RepresentationControl,andersonmoore1971linearoptimalcontrol}]\label{soluzione_Riccati_i_ii}
		Assume \cref{ipotesi_H_primo_primo}. Let $\alpha\in \ccal A$ and $(t,x)\in \ttnn{dom }W^\alpha$. Then the following holds:
		\enurom{
			\item $P^\alpha_T\in C([t,T];\bb R^{n\times n})\cap C^1((t,T);\bb R^{n\times n})$ and $P^\alpha_T(s)$ is positive definite for all $s\in[t,T]$ and all $T>t$;
			
			\item\label{Riccati_P_orizzonte_infinito_soluzione} for all $s\mageq t$ the limit $P^\alpha\rs\doteq \lim_{T\ra\infty}P^\alpha_T(s)$ exists, is positive definite, and it solves the Riccati differential equation
			\equazioneref{Riccati_inf_hor}{
				-{P}'=A^\star P+P A- P BR^{-1} B^\star P+Q^\alpha \quad  \ttnn{a.e. on }[t,\infty).
			}
			Such solution  is also called \textit{minimal (or stabilizing) solution} of the Riccati equation \cref{Riccati_inf_hor}.
		}	
	\end{lemma}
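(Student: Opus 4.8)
The plan is to reduce both assertions to the classical finite- and infinite-horizon linear-quadratic regulator (LQR) theory of \cite{DaPrato:2006:RepresentationControl,andersonmoore1971linearoptimalcontrol}, with the hypothesis $(t,x)\in\ttnn{dom }W^\alpha$ playing the role of the stabilizability assumption that customarily accompanies that theory. I would first record two facts. (a) \cref{sistema_di_Riccati} is a matrix Riccati terminal-value problem with Carath\'eodory data: by \cref{ipotesi_H_primo_primo} the maps $A\ccd,B\ccd$ are continuous and $Q^\alpha\ccd=(\tfrac12 K\ccd+a(\alpha\ccd))I_n$ is symmetric, nonnegative and locally integrable, while its right-hand side is quadratic (hence locally Lipschitz) in $P$. (b) One has the value-function representation $\ps{P^\alpha_T(s)z}{z}=\inf_u\int_s^T(\ps{y(r)}{Q^\alpha(r)y(r)}+\ps{u(r)}{Ru(r)})\,dr$, the infimum being over controls $u$ and trajectories $y'=Ay+Bu$ with $y(s)=z$, for all $s\in[t,T]$ and $z\in\bb R^n$; moreover, under the change of variables $y=h(\xi)$, which by \cref{f_sezione_4} and the chain rule turns $\xi'=f_0(s,\xi)+f_1(s,\xi)u$ exactly into $y'=Ay+Bu$ and $\ell$ into $\ps{y}{Q^\alpha y}+\ps{u}{Ru}-b(\alpha)$, this representation identifies $\ps{P^\alpha_T(\cdot)h(x)}{h(x)}$ with the value of an unconstrained finite-horizon version of problem \cref{W_alpha}, up to the constant $b(\alpha\ccd)$.

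For part (i): Carath\'eodory's theorem provides a local solution of \cref{sistema_di_Riccati}; taking $u\equiv 0$ in the representation gives $0\preceq P^\alpha_T(s)$ and a bound on $\norm{P^\alpha_T(s)}$ over $[t,T]$, which rules out finite-time blow-up of the quadratic ODE and hence yields a solution on all of $[t,T]$; symmetry then follows from uniqueness (if $P$ solves \cref{sistema_di_Riccati}, so does $P^\star$). The positive definiteness of $P^\alpha_T$ away from the terminal time, together with the continuity on $[t,T]$ and the $C^1$ regularity on $(t,T)$, are the standard conclusions of the finite-horizon LQR theory of \cite[Ch.~1, part~3]{DaPrato:2006:RepresentationControl} and \cite[Ch.~3]{andersonmoore1971linearoptimalcontrol}, which I would cite.

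For part (ii) the two ingredients are monotonicity in the horizon and a horizon-uniform bound. \emph{Monotonicity}: for $t\mineq T_1\mineq T_2$ and all $s\in[t,T_1]$, $z\in\bb R^n$, one has $\ps{P^\alpha_{T_1}(s)z}{z}\mineq\ps{P^\alpha_{T_2}(s)z}{z}$, since the integrand in the representation is nonnegative, so enlarging the horizon can only increase the infimum (equivalently, by the comparison principle for Riccati equations). \emph{Uniform bound}: here I would use $(t,x)\in\ttnn{dom }W^\alpha$ to obtain $u\in\ccal U(t,x)$ with $J_\alpha(t,x,u)<\infty$, whose transformed trajectory $y=h(\xi_u)$ solves the linear system, is bounded (because $\xi_u\ccd\subset\Omega$ is compact and $h$ is continuous), and has finite integrated quadratic cost; feeding this control, together with suitable perturbations of it afforded by the controllability properties used in classical LQR theory, into the representation bounds $\norm{P^\alpha_T(s)}$ by a constant independent of $T$, for every $s\mageq t$. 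Hence $P^\alpha(s)\doteq\lim_{T\ra\infty}P^\alpha_T(s)$ exists by monotone convergence, is symmetric, and satisfies $P^\alpha(s)\succeq P^\alpha_{T_0}(s)\succ 0$ for any fixed $T_0>s$, so it is positive definite. Finally, for the Riccati equation on $[t,\infty)$ I would pass to the limit as $T\ra\infty$ in
\[
P^\alpha_T(s)=P^\alpha_T(S)+\int_s^S\quadre{A^\star P^\alpha_T+P^\alpha_T A-P^\alpha_T BR^{-1}B^\star P^\alpha_T+Q^\alpha}(r)\,dr\qquad(t\mineq s\mineq S<T),
\]
using $P^\alpha_T\uparrow P^\alpha$ pointwise together with the uniform bound and the integrability of $A,B,Q^\alpha$ on the compact interval $[s,S]$ to pass the limit under the integral sign; the resulting identity is the integral form of \cref{Riccati_inf_hor} on $[t,\infty)$, so $P^\alpha$ is locally absolutely continuous and solves \cref{Riccati_inf_hor} a.e., and its identification with the minimal (stabilizing) solution is the standard characterization of \cite{DaPrato:2006:RepresentationControl,andersonmoore1971linearoptimalcontrol}.

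The step I expect to be the main obstacle is the horizon-uniform bound in part (ii): converting the information ``$(t,x)\in\ttnn{dom }W^\alpha$'' (which a priori controls only the LQR cost from the single initial state $h(x)$) into a bound on the full matrix $P^\alpha_T(s)$ for every $s\mageq t$ forces one to lean on the stabilizability/controllability structure of the linear system, exactly as in the cited LQR references; a secondary bookkeeping point is handling the constant term $-b(\alpha\ccd)$ and the possibly unbounded factor $a(\alpha\ccd)$ when passing between problem \cref{W_alpha} and the purely quadratic LQR cost.
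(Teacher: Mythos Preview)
The paper does not supply its own proof of this lemma: it is introduced with the sentence ``The following result is well known'' and attributed to \cite[Chapter~1, part~3]{DaPrato:2006:RepresentationControl} and \cite[Chapter~3]{andersonmoore1971linearoptimalcontrol}, with no argument given. Your proposal therefore goes well beyond what the paper does, and is a faithful sketch of the classical LQR argument from those very references --- Carath\'eodory local existence plus the value-function a~priori bound for~(i), and monotonicity in the horizon together with a uniform bound and passage to the limit in the integral Riccati identity for~(ii).

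The reservation you raise at the end is the right one and is worth stating plainly: the hypothesis $(t,x)\in\ttnn{dom }W^\alpha$ controls the quadratic cost only from the single initial state $h(x)$, whereas a horizon-uniform bound on the full matrix $P^\alpha_T(s)$ requires finite cost from \emph{every} initial state in $\bb R^n$. Closing that gap is exactly the role of the stabilizability (or detectability) assumption in the cited references, and the paper simply absorbs that point into the citation rather than justifying it from the stated hypotheses. So your sketch is correct as an outline of the standard proof, and your identification of the obstacle is accurate; it is not an obstacle you are expected to overcome here, since the paper itself does not.
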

	In the following, for any $\alpha\in \ccal A$, $[t,T]\subset \rzero$, and $x\in \Omega$ we denote by $\xi^\alpha\ccd$ the solution of the Cauchy problem
	\sistemaref{riccatisystem}{
		\xi'\rs=\nabla h(\xi\rs)^{-1} \Gamma^\alpha\rs h(\xi\rs)&s\in [t,T]\ttnn{ a.e.}\\
		\xi(t)=x,
	}
	where $\Gamma^\alpha\rs\doteq A\rs -B\rs B^\star\rs P^\alpha(s)$.
	
	\begin{theorem}\label{theorem1}
		Assume \cref{ipotesi_H_primo_primo} and let $\alpha\in \ccal A$. Suppose that
		\equazioneref{IPCfull_su_tutto_rzero}{
			\Gamma^\alpha\rs h(x)\in \nabla h(x)^{-\star}\,(\ttnn{int }T_\Omega(x))\quad \forall s\in \rzero,\, \forall x\in \partial \Omega.
		}	
		Then, the value function of the auxiliary control problem \cref{W_alpha} satisfies for all $(t,x)\in \ttnn{dom }W^\alpha$
		\equazioneref{W_uguale_P_piu_int_onInfiniteHorizon}{
			W^\alpha(t,x)=\ps{h(x)}{P^\alpha\rt h(x)}-\int_t^\infty b(\alpha\rs)ds.
		}
		Moreover, the optimal input $u^\alpha_\infty\ccd$ satisfies the feedback law
		\equazioneref{feedbaklawinfty}{
			u^\alpha_\infty\rs=-B^\star\rs P^\alpha\rs h(\xi^\alpha\rs)
		}
		for a.e. $s\mageq t$, where $\xi^\alpha\ccd$ solves the Cauchy problem
		\sistemanoref{
			\xi'\rs=\nabla h(\xi\rs)^{-1} \Gamma^\alpha\rs h(\xi\rs)&s\in [t,\infty)\ttnn{ a.e.}\\
			\xi(t)=x.
		}
		
	\end{theorem}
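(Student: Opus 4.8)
The plan is to reduce the infinite-horizon problem \cref{W_alpha} under the affine-in-control dynamics \cref{f_sezione_4} to a classical LQR problem via the diffeomorphism $h$, then invoke the viability machinery of \Cref{viability} together with the known Riccati asymptotics of \Cref{soluzione_Riccati_i_ii}. First I would perform the change of variables $y(s) = h(\xi(s))$: since $h$ is a diffeomorphism, $\xi\ccd$ solves the constrained system \cref{sistemacontrollo} with $f_0, f_1$ as in \cref{f_sezione_4} and $\xi\ccd \subset \Omega$ if and only if $y\ccd$ solves the \emph{linear} system $y'\rs = A\rs y\rs + B\rs u\rs$ with $y\ccd \subset h(\Omega) =: \tilde\Omega$, a compact set. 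Under this identification the running cost $\ell(s,\xi,u,\alpha(s)) = \ps{y}{Q^\alpha\rs y} + \ps{u}{Ru} - b(\alpha\rs)$, so $W^\alpha(t,x) = \tilde W^\alpha(t,h(x)) - \int_t^\infty b(\alpha\rs)\,ds$ where $\tilde W^\alpha$ is the value function of the \emph{linear-quadratic} state-constrained problem on $\tilde\Omega$. The inward pointing condition \cref{IPCfull_su_tutto_rzero} is exactly the statement that, in $y$-coordinates, the closed-loop velocity $\Gamma^\alpha\rs y$ lies in $\ttnn{int}\,T_{\tilde\Omega}(y)$ for all $y \in \partial\tilde\Omega$, using the standard transformation rule $T_{h(\Omega)}(h(x)) = \nabla h(x)\,T_\Omega(x)$ for a diffeomorphism (hence $\nabla h(x)^{-\star}(\ttnn{int}\,T_\Omega(x))$ matches the contingent cone of $h(\Omega)$ after the natural identification via the transpose acting on the cost side — one must be careful here, since the cost pairing $\ps{h(\xi)}{\cdot}$ is what dictates the appearance of $\nabla h(x)^{-\star}$).

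Next I would establish the upper bound $W^\alpha(t,x) \le \ps{h(x)}{P^\alpha\rt h(x)} - \int_t^\infty b(\alpha\rs)\,ds$. The candidate trajectory is $\xi^\alpha\ccd$ solving the closed-loop Cauchy problem with feedback $u^\alpha_\infty\rs = -B^\star\rs P^\alpha\rs h(\xi^\alpha\rs)$; in $y$-coordinates this is $y' = \Gamma^\alpha\rs y$. I must check this closed-loop trajectory is \emph{feasible}, i.e. stays in $\tilde\Omega$: this is where \cref{IPCfull_su_tutto_rzero} plus \Cref{viability} (via \Cref{remark_extension_set_valued_map} for the boundedness extension, iterated over unit time intervals as in the proof of \Cref{viab_tramite_ipc}) — applied to the single-valued, hence trivially convex-valued, map $y \mapsto \{\Gamma^\alpha\rs y\}$ — gives existence of a viable solution; since the closed-loop ODE has a unique solution, that viable solution \emph{is} $\xi^\alpha\ccd$. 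Actually the inward-pointing condition gives more directly that $\tilde\Omega$ is invariant for this flow on $[t,\infty)$. Then the classical finite-horizon LQR identity $\int_t^T (\ps{y}{Q^\alpha y} + \ps{u}{Ru})\,ds = \ps{y(t)}{P^\alpha_T(t) y(t)} - \ps{y(T)}{P^\alpha_T(T)y(T)} + (\text{nonnegative completion-of-squares term})$, evaluated along the feedback (which annihilates the square term), gives $\int_t^T(\cdots)\,ds = \ps{h(x)}{P^\alpha_T(t)h(x)}$; letting $T\to\infty$ and using \Cref{soluzione_Riccati_i_ii}\ref{Riccati_P_orizzonte_infinito_soluzione} ($P^\alpha_T(t) \to P^\alpha(t)$) yields the upper bound, and simultaneously identifies the integral of the running cost as convergent.

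For the lower bound $W^\alpha(t,x) \ge \ps{h(x)}{P^\alpha\rt h(x)} - \int_t^\infty b(\alpha\rs)\,ds$, I would take an arbitrary $u \in \ccal U(t,x) \cap L^2(t,\infty;\bb R^m)$ (enough by \Cref{Lemmachiave}$(i)$, noting $\ell_0(s,u) = \tfrac12|u|^2$ and \cref{ipotesi_H}\ref{H_L_mageq_u_piu_psi}) with associated feasible $y_u\ccd \subset \tilde\Omega$, hence bounded. The completion-of-squares identity on $[t,T]$ gives $\int_t^T(\ps{y_u}{Q^\alpha y_u}+\ps{u}{Ru})\,ds = \ps{h(x)}{P^\alpha_T(t)h(x)} - \ps{y_u(T)}{P^\alpha_T(T)y_u(T)} + \int_t^T |R^{1/2}(u + R^{-1}B^\star P^\alpha_T y_u)|^2\,ds \ge \ps{h(x)}{P^\alpha_T(t)h(x)} - \ps{y_u(T)}{P^\alpha_T(T)y_u(T)}$. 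Here $P^\alpha_T(T) = 0$, so the subtracted term vanishes, and letting $T\to\infty$ gives $J_\alpha(t,x,u) + \int_t^\infty b \ge \ps{h(x)}{P^\alpha(t)h(x)}$; taking the infimum over $u$ finishes it. The optimality of $u^\alpha_\infty\ccd$ and the feedback form \cref{feedbaklawinfty} then follow since the upper-bound computation shows this particular control attains $W^\alpha(t,x)$.

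The main obstacle I anticipate is not the completion-of-squares algebra (routine, via $\tfrac{d}{ds}\ps{y\rs}{P^\alpha_T\rs y\rs}$ and the Riccati equation \cref{sistema_di_Riccati}) but rather \emph{justifying that the closed-loop trajectory $\xi^\alpha\ccd$ exists globally on $[t,\infty)$ and remains in $\Omega$}: one needs that \cref{IPCfull_su_tutto_rzero} translates cleanly into the hypothesis of \Cref{viability} in the $y$-variable — in particular checking the correct cone transformation rule under $h$ and confirming the local boundedness needed for \Cref{remark_extension_set_valued_map} (here $\Gamma^\alpha\ccd$ is continuous, hence locally bounded in time, so this is mild) — and then that uniqueness of solutions of the linear closed-loop ODE identifies the viable solution with $\xi^\alpha\ccd$. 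A secondary subtlety is interchanging $\lim_{T\to\infty}$ with the value-function infimum in the lower bound, which is handled by the monotone/nonnegativity structure already exploited in the proof of \Cref{proplscW}.
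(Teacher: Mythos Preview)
Your overall strategy is sound and actually \emph{more direct} than the paper's for the upper bound. The paper does not work with the infinite-horizon closed loop $\xi^\alpha$ from the outset; instead it takes a sequence $T_j\uparrow\infty$, builds the \emph{finite-horizon} closed-loop trajectories $\xi^\alpha_j$ with feedback $u^\alpha_j=-B^\star P^\alpha_{T_j}h(\xi^\alpha_j)$ (feasible via \Cref{lemmaRiccatifeasible}), and then extracts a limiting feasible trajectory on $[t,\infty)$ by Ascoli--Arzel\`a, Dunford--Pettis, and a diagonal argument, passing to the limit in the finite-horizon cost identity with Fatou. Your route---construct $\xi^\alpha$ directly from the $P^\alpha$-feedback, verify feasibility by viability on the single-valued closed-loop map, and read off the cost from a completion-of-squares---avoids that compactness machinery entirely. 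The lower bound you give is essentially the same as the paper's (which routes it through \Cref{lemmaWT}).

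There is, however, one genuine slip in your upper-bound computation. You invoke the completion-of-squares identity with $P^\alpha_T$ and then claim your candidate feedback $u^\alpha_\infty=-B^\star P^\alpha h(\xi^\alpha)$ ``annihilates the square term''. It does not: the square in the $P^\alpha_T$-identity is $|R^{1/2}(u+R^{-1}B^\star P^\alpha_T\,y)|^2$, which is killed only by the \emph{finite-horizon} feedback $-R^{-1}B^\star P^\alpha_T y$, not by the $P^\alpha$-feedback. The fix is simple: run the completion-of-squares with $P^\alpha$ itself (which solves the Riccati ODE on $[t,\infty)$ by \Cref{soluzione_Riccati_i_ii}\ref{Riccati_P_orizzonte_infinito_soluzione}). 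Differentiating $s\mapsto\ps{y\rs}{P^\alpha\rs y\rs}$ and integrating gives
\[
\int_t^T\bigl(\ps{y}{Q^\alpha y}+\ps{u}{Ru}\bigr)\,ds
=\ps{h(x)}{P^\alpha(t)h(x)}-\ps{y(T)}{P^\alpha(T)y(T)}+\int_t^T\bigl|R^{1/2}\bigl(u+R^{-1}B^\star P^\alpha y\bigr)\bigr|^2\,ds.
\]
Now your feedback \emph{does} kill the last integral, and since $P^\alpha(T)$ is positive definite you get $\int_t^T(\cdots)\le\ps{h(x)}{P^\alpha(t)h(x)}$ for every $T$, hence the upper bound after subtracting $\int_t^\infty b$. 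A side remark: your caution about the cone transformation is warranted---the appearance of $\nabla h(x)^{-\star}$ in \cref{IPCfull_su_tutto_rzero} is \emph{not} the pushforward rule $T_{h(\Omega)}(h(x))=\nabla h(x)\,T_\Omega(x)$, so your claim that the hypothesis is ``exactly'' the $y$-coordinate inward-pointing condition needs a second look; the paper itself applies viability in the $\xi$-variable (cf.\ the proof of \Cref{lemmaRiccatifeasible}) rather than transforming cones.
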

	
	\begin{corollary}\label{corollary_main_theo}
		Assume \cref{ipotesi_H_primo_primo}, the set-valued map in \cref{set_valued_map_Lambda} has single valued images, and there exists a unique solution $( P^*, \xi^*)$ of
		\sistemanoref{
			-{P}'\rs=A^\star\rs P\rs+P\rs A\rs- P \rs B\rs R^{-1}\rs B^\star \rs P\rs+ Q^*\rs& s\in[t,\infty)\ttnn{ a.e.}\\
			\xi'\rs=\nabla h(\xi\rs)^{-1}  \Gamma^*\rs h(\xi\rs)& s\in[t,\infty)\ttnn{ a.e.}\\
			\xi(t)=x,
		}
		where $(t,x)\in \ttnn{dom }W$, ${ \alpha^*} \rs := \ttnn{argmax}_{\beta\mageq 0} \, (a(\beta)g( \xi^*\rs)-b(\beta))$, $Q^*(s):=Q^{ \alpha^*}(s)$, and $ \Gamma^*\rs:=\Gamma^{ \alpha^*}\rs$ for all $s\mageq t$.
		
		%
		
		Then, if \cref{IPCfull_su_tutto_rzero} holds, the optimal input $u^*_\infty\ccd$ for the state constrained control problem \cref{problema_W_inf} at the initial datum $(t,x)$ is given, for all $s\mageq t$, by
		\eee{
			u^*_\infty\rs=-B^\star\rs  P^*{{ }}\rs h( \xi^*{}\rs),
		}
		and
		\eee{
			W(t,x)=\sup_{\alpha\in \ccal A} W^\alpha(t,x).
		}
		\begin{remark}
			\
			
			\enualp{
				\item We notice that condition \cref{IPCfull_su_tutto_rzero} involves, implicitly, the stabilizing solution of the Riccati equation \cref{Riccati_inf_hor} given, accordingly to \cref{soluzione_Riccati_i_ii}-\ref{Riccati_P_orizzonte_infinito_soluzione}, by the pointwise limit of a sequence of Riccati solutions on increasing time intervals. Moreover, it reduces to an i.p.c. on the vector field $(s,x)\mapsto\Gamma^\alpha\rs x$ when $h$ is the identity (see \cref{remark1}). 
				More precisely, under the assumption that $\Omega$ is the closure of an open domain with smooth boundary, \cref{IPCfull_su_tutto_rzero} is as follows: $\forall x\in \partial \Omega$ and $\forall s\in \rzero$,  $\ps{\Gamma^\alpha(s)x}{n(x)}<0$,	where $n(x)$ denotes the exterior unit normal to $\Omega$ at $x$.
				
				
				
				\item Assuming more regularity on problem data, \cref{IPCfull_su_tutto_rzero} provides a neighboring feasible trajectories result  (see \Cref{lemma_appendice} in Appendix).
				
				\item We point out that, although the solutions of the system \cref{sistemacontrollo}, with $f_0$ and $f_1$ as in \cref{f_sezione_4}, are the same of the system $z'\rs=A\rs z\rs+B\rs u\rs$ under the trasformation $\xi=h^{-1}(z)$,
				the verification of the i.p.c. imposed in \Cref{theorem1} and \Cref{corollary_main_theo} is challenging for the set $h(\Omega)$.
				
			}

			%
			%
		\end{remark}
		
		
	\end{corollary}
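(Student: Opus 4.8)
The plan is to deduce \Cref{corollary_main_theo} by specializing \Cref{theorem1} to the self-consistent parameter $\alpha^*\ccd$ and then exploiting the single-valuedness of $\Lambda(s,\cdot)$ to show that this very $\alpha^*$ realizes the supremum in \cref{tesi_theorem_prop_W_uguale_sup_W_alpha} and that its feedback is optimal for the constrained problem \cref{problema_W_inf}. In effect, single-valuedness is what makes the value of the original problem collapse onto the value of a single auxiliary quadratic problem.

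First I would settle the bookkeeping. Since $\Lambda(s,\cdot)$ has single-valued (in particular nonempty) images and $\xi^*\ccd$ is continuous, with $a\ccd,b\ccd$ continuous, the map $s\mapsto\alpha^*\rs=\ttnn{argmax}_{\beta\mageq0}(a(\beta)g(\xi^*\rs)-b(\beta))$ is measurable, so $\alpha^*\in\ccal A$, and by the form of $Q$ in \cref{ipotesi_H_primo_primo} the value $\alpha^*\rs$ is precisely the unique element of $\Lambda(s,\xi^*\rs)$. By \Cref{soluzione_Riccati_i_ii} the minimal solution $P^{\alpha^*}\ccd$ of \cref{Riccati_inf_hor} with $Q^{\alpha^*}$ exists and is continuous, hence $\Gamma^{\alpha^*}\ccd=A\ccd-B\ccd B^\star\ccd P^{\alpha^*}\ccd$ is continuous and the Cauchy problem \cref{riccatisystem} on $[t,\infty)$ has a unique solution $\xi^{\alpha^*}\ccd$; the pair $(P^{\alpha^*}\ccd,\xi^{\alpha^*}\ccd)$ then solves the coupled Riccati--state system of the statement, so by the assumed uniqueness $(P^*\ccd,\xi^*\ccd)=(P^{\alpha^*}\ccd,\xi^{\alpha^*}\ccd)$ and $\Gamma^*\ccd=\Gamma^{\alpha^*}\ccd$. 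Moreover, since $\scr L(s,\xi,u)=\sup_{\beta\mageq0}\ell(s,\xi,u,\beta)\mageq\ell(s,\xi,u,\alpha\rs)$, one has $J_\alpha(t,x,u)\mineq J(t,x,u)$ for every feasible $u$ and every $\alpha\in\ccal A$ (cf.\ the first part of the proof of \Cref{prop_W_uguale_sup_W_alpha}), whence $\sup_{\alpha\in\ccal A}W^\alpha(t,x)\mineq W(t,x)<\infty$; in particular $(t,x)\in\ttnn{dom}\,W^{\alpha^*}$.

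Since \cref{IPCfull_su_tutto_rzero} holds for $\Gamma^*=\Gamma^{\alpha^*}$, \Cref{theorem1} applied with the parameter $\alpha^*$ gives $W^{\alpha^*}(t,x)=\ps{h(x)}{P^{\alpha^*}\rt h(x)}-\int_t^\infty b(\alpha^*\rs)\,ds$ together with an optimal control $u^{\alpha^*}_\infty\in\ccal U(t,x)$ for $W^{\alpha^*}$ satisfying $u^{\alpha^*}_\infty\rs=-B^\star\rs P^{\alpha^*}\rs h(\xi^{\alpha^*}\rs)=-B^\star\rs P^*\rs h(\xi^*\rs)=u^*_\infty\rs$, with feasible trajectory $\xi^*\ccd\subset\Omega$; moreover the closed-loop dynamics coincide with the open-loop dynamics driven by $u^*_\infty$, so $\xi_{u^*_\infty}\ccd=\xi^*\ccd$ and $u^*_\infty\in\ccal U(t,x)$. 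The decisive step is then to compare $J(t,x,u^*_\infty)$ with $J_{\alpha^*}(t,x,u^*_\infty)$. Because $\ell(s,\xi,u,\alpha)=\ell_1(s,\xi,\alpha)+\ell_0(s,u)$ separates the $\alpha$- and $u$-dependence, for a.e.\ $s\mageq t$
\begin{align*}
\scr L(s,\xi^*\rs,u^*_\infty\rs)&=\ell_0(s,u^*_\infty\rs)+\sup_{\beta\mageq0}\ell_1(s,\xi^*\rs,\beta)\\
&=\ell_0(s,u^*_\infty\rs)+\ell_1(s,\xi^*\rs,\alpha^*\rs)=\ell(s,\xi^*\rs,u^*_\infty\rs,\alpha^*\rs),
\end{align*}
the middle identity being exactly $\alpha^*\rs\in\Lambda(s,\xi^*\rs)$. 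Integrating over $[t,\infty)$ yields $J(t,x,u^*_\infty)=J_{\alpha^*}(t,x,u^*_\infty)=W^{\alpha^*}(t,x)$ (all quantities finite, since $\scr L\mageq0$ excludes the value $-\infty$ and $W(t,x)<\infty$).

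Concatenating the inequalities, $W(t,x)\mineq J(t,x,u^*_\infty)=W^{\alpha^*}(t,x)\mineq\sup_{\alpha\in\ccal A}W^\alpha(t,x)\mineq W(t,x)$, so all of them are equalities: $W(t,x)=\sup_{\alpha\in\ccal A}W^\alpha(t,x)=W^{\alpha^*}(t,x)=J(t,x,u^*_\infty)$, i.e.\ $u^*_\infty\ccd=-B^\star\ccd P^*\ccd h(\xi^*\ccd)$ is optimal for \cref{problema_W_inf} at $(t,x)$, which is the assertion. The main obstacle is not a single estimate but the self-consistency handled in the second paragraph: verifying that the self-referential feedback $\alpha^*$ — defined through $\xi^*$, which in turn depends on $P^*=P^{\alpha^*}$ — is an admissible measurable control, and that the unique solution of the coupled Riccati--state system is precisely the synthesis produced by \Cref{theorem1}. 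Once that identification is secured, everything hinges on the single-valuedness of $\Lambda$, which forces $J(t,x,u^*_\infty)=J_{\alpha^*}(t,x,u^*_\infty)$ and thereby closes the chain of inequalities.
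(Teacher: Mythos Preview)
Your argument is correct and takes a genuinely different route from the paper's. The paper proceeds by first verifying the closeness estimate \cref{tubo} for the stabilizing closed loop (via \cite[Chapter 14]{andersonmoore1971linearoptimalcontrol} and \Cref{remark_lip_cont}), then invoking \Cref{prop_W_uguale_sup_W_alpha} to obtain $W=\sup_\alpha W^\alpha$, and finally appealing to \Cref{theorem1}, \Cref{Lemmachiave}-\ref{lemma_statement_alpha_delta_meas_selection}, and an external result \cite[Theorem~5.14]{dowercantonimcene2019gamerepbarr} to identify the optimal feedback. Your proof bypasses \Cref{prop_W_uguale_sup_W_alpha} entirely: once $(P^*,\xi^*)$ is identified with $(P^{\alpha^*},\xi^{\alpha^*})$ via uniqueness (using that $\Gamma^*=\Gamma^{\alpha^*}$ forces $\xi^*=\xi^{\alpha^*}$ by ODE uniqueness, so the self-consistency closes), the single-valuedness of $\Lambda$ gives $J(t,x,u^*_\infty)=J_{\alpha^*}(t,x,u^*_\infty)$ pointwise along $\xi^*$, and the sandwich $W\mineq J(t,x,u^*_\infty)=W^{\alpha^*}\mineq\sup_\alpha W^\alpha\mineq W$ yields both conclusions simultaneously. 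What your approach buys is that it needs none of the side hypotheses of \Cref{prop_W_uguale_sup_W_alpha}---no interior-trajectory assumption, no tube estimate \cref{tubo}, no strict convexity or Fr\'echet differentiability, and no appeal to \cite{dowercantonimcene2019gamerepbarr}; it runs under \cref{ipotesi_H_primo_primo} alone. The paper's route, by contrast, situates the corollary inside the general two-player representation machinery of \Cref{value_function}, but at the cost of checking those extra hypotheses.
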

	
	\subsection{Proofs}
	In this section we provide proofs of \cref{theorem1} and \cref{corollary_main_theo}. We first show some intermediate results.
	
	\begin{lemma}\label{lemmaRiccatifeasible}
		Assume \cref{ipotesi_H_primo_primo}. Let $P\in C([t,T];\bb R^{n\times n})$ with values in the set of all symmetric positive definite matrices, and suppose the following inward pointing condition on $[t,T]\subset \rzero$ holds:
		\equazioneref{IPCfull}{
			(A\rs -B\rs B^\star\rs P(s)) h(x)\in \nabla h(x)^{-\star}\,(\ttnn{int }T_\Omega(x))
		}
		for all $ s\in [t,T]$ and $x\in \partial \Omega$. Then, for any $x\in  \Omega$, the trajectory $\xi\ccd$ solving $\ttnn{a.e. on }[t,T]$
		\sistemaref{tesi_1_traiettoria_riccati}{
			\xi'=\nabla h(\xi)^{-1} (A\rs -B\rs B^\star\rs P(s)) h(\xi)\\
			\xi(t)=x,
		}
		is feasible.


	\end{lemma}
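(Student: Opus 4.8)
The plan is to obtain feasibility from the measurable viability theorem \Cref{viability} (in the locally bounded form of \Cref{remark_extension_set_valued_map}) applied to the single-valued right-hand side of \cref{tesi_1_traiettoria_riccati}. Write $\Gamma(s)\doteq A\rs-B\rs B^\star\rs P(s)$ and $g(s,x)\doteq \nabla h(x)^{-1}\Gamma(s)h(x)$, so that \cref{tesi_1_traiettoria_riccati} reads $\xi'=g(s,\xi)$, $\xi(t)=x$. Since $A,\,B$ are continuous, $P\in C([t,T];\bb R^{n\times n})$, and $h$ is a $C^1$ diffeomorphism (hence $x\mapsto\nabla h(x)^{-1}$ is continuous), $g$ is continuous on $[t,T]\times\bb R^n$; in particular $g(s,\cdot)$ is continuous for every $s$, and on each ball $B(0,\rho)$ one has $\sup_{x\in B(0,\rho)}|g(s,x)|\mineq\theta_\rho\rs$ for a.e. $s\in[t,T]$ with $\theta_\rho\in L^\infty(t,T;\rpiu)\subset L^1(t,T;\rpiu)$. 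Existence and uniqueness of $\xi\ccd$ on $[t,T]$ causes no trouble: under $z\ccd\doteq h(\xi\ccd)$ problem \cref{tesi_1_traiettoria_riccati} becomes the linear Cauchy problem $z'=\Gamma(s)z$, $z(t)=h(x)$, which has a unique global solution, and $\xi\ccd=h^{-1}(z\ccd)$ is then the unique solution of \cref{tesi_1_traiettoria_riccati}.

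The crux is to convert the inward pointing condition \cref{IPCfull} into the tangency requirement of \Cref{viability}, namely
\eee{
g(s,x)\in T_\Omega(x)\qquad\forall\,s\in[t,T],\ \forall\,x\in\partial\Omega.
}
Here I would use that, $h$ being a $C^1$ diffeomorphism, contingent cones transform through the Jacobian, so that via the change of variables $z=h(\xi)$ feasibility of $\xi\ccd$ for $\Omega$ is equivalent to feasibility of $z\ccd$ for the compact set $h(\Omega)$, and the displayed tangency on $\partial\Omega$ is equivalent to $\Gamma(s)z\in T_{h(\Omega)}(z)$ on $\partial h(\Omega)=h(\partial\Omega)$. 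Since \cref{IPCfull} places $\Gamma(s)h(x)$ in (a linear image of) $\ttnn{int }T_\Omega(x)$, it delivers this tangency — membership in $T_\Omega(x)$, rather than in its interior, being all that is needed — using only the invertibility of $\nabla h(x)$ and no regularity of $\partial\Omega$. I expect this to be the main obstacle: it is the sole point where the geometry of the merely compact constraint set and the nonlinear coordinate change $h$ interact, and it must be handled purely through the diffeomorphism transformation rule for contingent cones.

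It then remains to invoke \Cref{viability}. Put $F(s,x)\doteq\{g(s,x)\}$ and replace $g$ by its globally bounded modification $G_*$ of \Cref{remark_extension_set_valued_map}, so that $F$ has closed convex (singleton) values, is measurable, admits a global $L^1$ velocity bound on $[t,T]$, has $F(s,\cdot)$ continuous, and solutions starting in $\Omega$ are unaffected. With $K=\Omega$ closed and the tangency condition just established, \Cref{viability} furnishes, for the given $x\in\Omega$, a solution $\tilde\xi\ccd$ on $[t,T]$ of $\tilde\xi'\in F(s,\tilde\xi)$ with $\tilde\xi(t)=x$ and $\tilde\xi\ccd\subset\Omega$. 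By the uniqueness recorded above, $\tilde\xi\ccd=\xi\ccd$, hence $\xi\ccd\subset\Omega$; that is, $\xi\ccd$ is feasible, as claimed.
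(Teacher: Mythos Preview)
Your approach is correct and coincides with the paper's: both apply the viability result (\Cref{viability} through \Cref{remark_extension_set_valued_map}) to the singleton map $(s,x)\rightsquigarrow\{\nabla h(x)^{-1}(A\rs-B\rs B^\star\rs P\rs)h(x)\}$, following the pattern of \Cref{viab_tramite_ipc} (the paper's cross-reference to \Cref{proplscW} is evidently a mislabel for that proposition). Your explicit uniqueness argument via the linearizing substitution $z=h(\xi)$, ensuring that the viable trajectory produced by \Cref{viability} is indeed the given $\xi\ccd$, is a useful detail the paper leaves implicit.
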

	
	\begin{proof}
		Arguing in analogous way as in \cref{proplscW} and considering the single-valued map given by
		\eee{
			(s,x)\rightsquigarrow\{\hat f(s,x)\doteq \nabla h(x)^{-1}(A\rs -B\rs B^\star\rs P(s)) h(x)\},
		}
		we conclude that $\xi\ccd$ solving \cref{tesi_1_traiettoria_riccati} is feasible.
	\end{proof}
	%

	\begin{lemma}\label{lemmaWT}
		Assume \cref{ipotesi_H_primo_primo}. Let $\alpha\in \ccal A$ and $(t,x)\in \ttnn{dom }W^\alpha$. If \cref{IPCfull} holds on $[t,T]$ then
		\eee{
			W_T^\alpha(t,x)=\ps{h(x)}{P^\alpha_T\rt h(x)}-\int_t^Tb(\alpha\rs)ds,
		}
		where $W_T^\alpha$ is the value function of the following state constrained Bolza problem
		\eee{
			&{\rm  minimize } \; \int_{t}^{T} \ps{h(\xi\rs)}{Q(s,\alpha\rs)h(\xi\rs)} +\ps{u\rs}{R u\rs} -b(\alpha\rs)\,ds
		}
		over all feasible trajectory-control pairs $(\xi\ccd,u\ccd)$ starting from $(t,x)$.

	\end{lemma}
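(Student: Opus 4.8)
The plan is to pass to the classical finite-horizon linear--quadratic regulator via the change of variables $z=h(\xi)$, and then to use \Cref{lemmaRiccatifeasible} to show that the (unconstrained) LQR-optimal trajectory never leaves $\Omega$, so that the state constraint is inactive. Set $z\rs\doteq h(\xi\rs)$: since $h$ is a diffeomorphism, $\xi$ is absolutely continuous on $[t,T]$ iff $z$ is, and $z'=\nabla h(\xi)\xi'$, so under \cref{f_sezione_4} the controlled dynamics of \cref{sistemacontrollo} become the linear system $z'=A\rs z+B\rs u$, $z(t)=h(x)$, the constraint $\xi\ccd\subset\Omega$ becomes $z\ccd\subset h(\Omega)$, and the running cost is exactly $\ps{z}{Q(s,\alpha\rs)z}+\ps{u}{Ru}-b(\alpha\rs)$. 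Because $-b(\alpha\ccd)$ depends on neither $\xi$ nor $u$, it contributes the constant $-\int_t^T b(\alpha\rs)\,ds$ to every admissible cost, so it suffices to prove that the value of $\min\int_t^T(\ps{z}{Q^\alpha\rs z}+\ps{u}{Ru})\,ds$ over $z'=Az+Bu$, $z(t)=h(x)$, $z\ccd\subset h(\Omega)$, equals $\ps{h(x)}{P^\alpha_T(t)h(x)}$.

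For the lower bound, let $(\xi,u)$ be any feasible pair and put $z=h(\xi)$; I may assume its cost is finite. By \cref{soluzione_Riccati_i_ii}(i), $P^\alpha_T\in C([t,T];\bb R^{n\times n})\cap C^1((t,T);\bb R^{n\times n})$ with symmetric positive definite values, so $s\mapsto\ps{z\rs}{P^\alpha_T\rs z\rs}$ is absolutely continuous on $[t,T]$; differentiating it, substituting the Riccati identity \cref{sistema_di_Riccati}, completing the square, and using $P^\alpha_T(T)=0$ yields
\eee{
\int_t^T(\ps{z}{Q^\alpha z}+\ps{u}{Ru})\,ds&=\ps{h(x)}{P^\alpha_T(t)h(x)}+\int_t^T\ps{u+R^{-1}B^\star P^\alpha_T z}{R(u+R^{-1}B^\star P^\alpha_T z)}\,ds\\
&\geq\ps{h(x)}{P^\alpha_T(t)h(x)},
}
the last step by positive definiteness of $R$. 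Every pair feasible for the state-constrained Bolza problem is admissible for the unconstrained one, so $W^\alpha_T(t,x)\geq\ps{h(x)}{P^\alpha_T(t)h(x)}-\int_t^T b(\alpha\rs)\,ds$.

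For the reverse inequality I would exhibit an optimizer. Since $P^\alpha_T$ is continuous with symmetric positive definite values and the assumption \cref{IPCfull} (with $P=P^\alpha_T$) holds on $[t,T]$, \Cref{lemmaRiccatifeasible} shows that the closed-loop trajectory $\xi^\alpha$ solving \cref{tesi_1_traiettoria_riccati} with $P=P^\alpha_T$ --- equivalently, $z^\alpha=h(\xi^\alpha)$ driven by the LQR-optimal feedback $u^\alpha\rs\doteq-R^{-1}B^\star\rs P^\alpha_T\rs h(\xi^\alpha\rs)$ --- stays in $\Omega$. Because $\Omega$ is compact and $P^\alpha_T$ is bounded on $[t,T]$, one has $u^\alpha\in L^2(t,T;\bb R^m)$, so $(\xi^\alpha,u^\alpha)$ is admissible for the Bolza problem; and since the quadratic remainder in the displayed identity vanishes along this pair, its cost is exactly $\ps{h(x)}{P^\alpha_T(t)h(x)}-\int_t^T b(\alpha\rs)\,ds$, which therefore bounds $W^\alpha_T(t,x)$ from above. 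Together with the lower bound this proves the claim, and identifies $(\xi^\alpha,u^\alpha)$ as optimal. The completion-of-squares computation and the minor integrability/convention bookkeeping are routine; the genuine content is the feasibility step, i.e.\ that the unconstrained LQR optimum automatically respects the state constraint --- this is precisely what the inward pointing condition \cref{IPCfull} buys us through \Cref{lemmaRiccatifeasible}, and it is the reason the constrained and unconstrained values coincide.
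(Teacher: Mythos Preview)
Your proof is correct and follows essentially the same two-step structure as the paper: a lower bound valid for every feasible pair, then an upper bound by exhibiting the Riccati feedback and invoking \Cref{lemmaRiccatifeasible} to check feasibility under \cref{IPCfull}. The only difference is cosmetic: the paper carries out the verification directly in the $\xi$-coordinates by defining $V^\alpha(s,x)=\ps{h(x)}{P_T^\alpha(s)h(x)}-\int_s^T b(\alpha)$, checking that it solves the Hamilton--Jacobi--Bellman equation, and integrating $\frac{d}{ds}V^\alpha(s,\xi_u(s))$ along an arbitrary feasible pair, whereas you first change variables to $z=h(\xi)$ and then run the classical completion-of-squares LQR computation. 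The two computations are line-by-line equivalent (HJB verification for a quadratic candidate \emph{is} completion of squares), and both rest on the same key point you correctly identified: the inward pointing condition \cref{IPCfull}, via \Cref{lemmaRiccatifeasible}, forces the unconstrained LQR optimum to remain in $\Omega$, so the constraint is inactive.
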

	\begin{proof}
		Put $P_T\ccd=P^\alpha_T\ccd$ and define the Hamiltonian $H^\alpha:\bb R\times\bb R^n\times \bb R^n\ra \bb R$
		\eee{
			H^\alpha(s, x, p)=\inf _{u \in \mathbb{R}^{m}}\{\langle p,f(s,x,u)\rangle+\ell(s,x,u,\alpha\rs) \}.		
		}	
		Notice that
		\eee{
			H^\alpha(s, x, p)&= \langle p, \nabla h(x)^{-1}A\rs h(x)\rangle-\dfrac{1}{2}\left\langle p, \nabla h(x)^{-1}B\rs B^\star\rs \nabla h(x)^{-\star} p\right\rangle\\
			&\quad +\ps{h(x)}{Q(s,\alpha)h(x)}-{b(\alpha\rs)}.
		}
		Define $V^\alpha(s,x)\doteq \ps{h(x)}{P_T\rs h(x)}-\int_s^T b(\alpha(\tau))d\tau$ for all $(s,x)\in [t,T]\times \bb R^n$. Then for a.e. $s\in [t,T]$ and all $x\in \bb R^n$
		\eee{
			\frac{\partial V^\alpha}{\partial s}(s, x)&=\left\langle h(x), {P_T'}\rs h(x)\right\rangle+b(\alpha(s)) \\
			&=-\langle h(x),(A^\star\rs P_T\rs+P_T\rs A\rs- 2P_T\rs B\rs B^\star\rs P_T\rs+Q^\alpha(s)) h(x)\rangle\\
			&\qquad+{b(\alpha\rs)},\\
			\nabla_{x} V^\alpha(s, x)&=2\nabla h(x)^{\star} P_T\rs h(x).
		}
		It follows that $-\frac{\partial V^\alpha}{\partial s}(s, x)=H^{\alpha}\left(s, x, \nabla_{x} V^\alpha(s, x)\right)$ for a.e. $s\in [t,T]$ and all $x\in \bb R^n$.	Fix $x\in \Omega$ and let $u\in \ccal U(t,x)$. We have for a.e. $s\in [t,T]$
		\eee{
			0&\mineq \frac{\partial V^\alpha}{\partial s}(s, \xi_u\rs)+\ps{ \nabla_{x} V^\alpha(s, \xi_u\rs)}{\nabla h(\xi_u\rs)^{-1}A\rs h(\xi_u\rs)+\nabla h(\xi_u\rs)^{-1}B\rs u\rs}\\
			&\qquad+\frac{1}{2}\left(K(s)+2a\left(\alpha\rs\right)\right)|h(\xi_u\rs)|^{2}+\dfrac{1}{2}|u\rs|^2-{b(\alpha\rs)}.
		}	
		So,	$-\int_t^T \frac{\ttnn{d}}{\ttnn{d}s}V^\alpha(s, \xi_u\rs) ds\mineq \int_t^T\ell  (s,\xi_u\rs,u\rs,\alpha\rs)ds$, and, from \cref{sistema_di_Riccati}, we get $V^\alpha(t,x)\mineq$ $\int_t^T\ell  (s,\xi_u\rs,u\rs,\alpha\rs)ds$. Since $u\ccd$ is arbitrary it follows $V^\alpha(t,x)\mineq W_T(t,x)$.
		Now, the control defined by
		\equazioneref{controlRiccati}{
			u^\alpha\rs\doteq -B^\star\rs P_T\rs h(\xi^\alpha\rs)
		}
		is optimal for $H^\alpha(s,\xi\rs,\nabla_{x} V^\alpha(s, x))$ for a.e. $s\in [t,T]$. So, applying \cref{lemmaRiccatifeasible}, the trajectory $ \xi^\alpha\ccd$ is feasible. Thus
		\eee{
			-V^\alpha(T, \xi^\alpha(T))+V^\alpha(t,x)=\int_t^T\ell  (s, \xi^\alpha\rs, u^\alpha\rs,\alpha\rs)ds= V^\alpha(t,x).
		}
		We conclude $\inf_{u\in \ccal U(t,x)} \int_t^T\ell  (s,\xi_u\rs,u\rs,\alpha\rs)ds\mineq V^\alpha(t,x)$, and so $(iii)$ is proved.		
	\end{proof}
	
	Next, we give a proof of \cref{theorem1}.
	
	\begin{proof}[Proof of \cref{theorem1}]	Let $\alpha\in \ccal A$ and $(t,x)\in \ttnn{dom }W^\alpha$.
		
		Since there exists $u\in \ccal U(t,x)$ such that $\lim_{T\ra\infty} \int_t^T \ell  (s,\xi_u(s) ,u (s),\alpha(s) )\,ds<\infty$, we have that the limit $\lim_{T\ra\infty} \int_t^T b(\alpha\rs)ds$ exists and is finite. Then, from \cref{lemmaWT},
		\eee{
			\int_t^T \ell  (s,\xi_u(s) ,u (s),\alpha(s) )\,ds\mageq \ps{h(x)}{P^\alpha_T(t)h(x)}-\int_t^Tb(\alpha\rs)ds
		}
		for all $T>t$ and all $u\in \ccal U(t,x)$.	Since $u\ccd$ is arbitrary and applying \cref{lemmaWT}, passing to the limit as $T\ra+\infty$ we get $W^\alpha(t,x)\mageq \ps{h(x)}{P^\alpha\rt h(x)}-\int_t^\infty b(\alpha\rs)ds$.
		
		Now, let $T_j\uparrow \infty$. Applying \cref{lemmaRiccatifeasible}, denote for all $j\in \bb N$ by $\xi^\alpha_j \ccd$ the Riccati feasible trajectory associated to the control on $[t,T_j]$ defined by $	u^\alpha_j\rs\doteq -B^\star\rs P^\alpha_{T_j}\rs h(\xi^\alpha_j\rs)$.
		Now, for all $T>t$, $P_j\doteq  P^\alpha_{T_j}$ are uniformly bounded on $[t,T]$ whenever $T_j>T$. So, for all $T>t$, arguing in analogous way as in \cref{proplscW}, we have that $\xi^\alpha_j \ccd$ are equintegrable, equicontinuous, and equibounded on $[t,T]$ for all large $j$. From the Ascoli-Arzel\`{a} and the Dunford-Pettis theorems ( \cite{brezis2010functional}), and arguing as in \cref{proplscW}, we conclude that there exists a feasible absolutely continuous trajectory $\xi^1\ccd$ solving \cref{riccatisystem} on $[t,t+1]$, starting from $x$, and $\xi^\alpha_j \ra\xi^1$ uniformly on $[t,t+1]$. Consider now the interval $[t,t+2]$. Arguing as above, passing to subsequences and keeping the same notation, we may conclude that there exists $\xi^2\ccd$ solving \cref{riccatisystem} on $[t,t+2]$ and starting from $x$ such that $\xi^2|_{[t,t+1]}=\xi^1\ccd$ and $\xi^\alpha_j\ra \xi^2$ uniformly on $[t,t+2]$. Using a diagonal argument, passing to subsequences and keeping the same notation, we conclude that there exists a feasible trajectory $\xi^\alpha\ccd$ solving \cref{riccatisystem} on $[t,\infty)$, starting from $x$, and $\xi^\alpha_j \ra\xi^\alpha$ uniformly on compact intervals. Denote the limit $u^\alpha_\infty\rs\doteq\lim_{j\ra \infty}-B^\star\rs P_j\rs h(\xi^\alpha_j\rs)$ for all $s\mageq t$.   We have for all large $j\in \bb N$
		\eee{
			\ps{h(x)}{P_j(t)h(x)}-\int_t^{T_j}b(\alpha\rs)ds&=\int_t^{T_j} \ell  (s,\xi^\alpha_j(s) , u^\alpha_j(s),\alpha(s) )\,ds\\
			&\mageq \int_t^{T} \ell  (s,\xi^\alpha_j(s) , u^\alpha_j(s),\alpha(s) )\,ds.
		}
		Then, passing to the limit as $j\ra \infty$ and using Fatou's Lemma, we get $\ps{h(x)}{P^\alpha(t)h(x)}-\int_t^{\infty}b(\alpha\rs)ds\mageq \int_t^{T} \ell  (s,\xi^\alpha(s) , u^\alpha_\infty(s),\alpha(s) )\,ds$. By arbitrariness of $T$, we finally get \cref{W_uguale_P_piu_int_onInfiniteHorizon}.
	\end{proof}
	
	\begin{proof}[Proof of \cref{corollary_main_theo}]
		Applying the results in \cite[Chapter 14]{andersonmoore1971linearoptimalcontrol} on the stabilizing solution of the Riccati equation \cref{Riccati_inf_hor} and \cref{remark_lip_cont}, the Lipschitz continuity on the initial datum given in \cref{tubo} is satisfied along the optimal trajectory. Hence the conclusions follows from \Cref{theorem1}, \Cref{prop_W_uguale_sup_W_alpha}, \cref{Lemmachiave}-\ref{lemma_statement_alpha_delta_meas_selection}, and the proof of \cite[Theorem 5.14]{dowercantonimcene2019gamerepbarr}.
	\end{proof}

	
	\subsection{A geometric condition}
	
	Next we provide a sufficient geometric condition to recover the i.p.c. when the matrix $A$ is time independent.
	
	\begin{proposition}\label{corollario_1}
		Assume \cref{ipotesi_H_primo_primo} with $B\in L^\infty(\rzero;\bb R^{n\times m})$ and $A$ time independent. Sup-pose there exists $\delta>0$ such that
		\equazioneref{ipotesi_geometrica}{
			h(x)-\delta \nabla h(x)^{-\star}(N^1_\Omega(x))\subset \delta \nabla h(x)^{-\star} (\ttnn{int }\bb B)\quad  \forall x\in \partial \Omega.
		}	
		Then for any $\alpha\in \graffe{\beta\,:\, \ttnn{dom }W^\beta \neq \emptyset}$ there exists a constant $\bar \gamma>0$ such that, if $A$ is $\gamma$-negative definite with $\gamma>\bar \gamma$, all conclusions of \cref{theorem1} holds true.
		
	\end{proposition}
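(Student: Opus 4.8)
The plan is to deduce the inward pointing condition \cref{IPCfull_su_tutto_rzero} for the given $\alpha$ from the geometric hypothesis \cref{ipotesi_geometrica}; once that is done \cref{theorem1} applies verbatim and yields all of its conclusions, so the whole argument is the implication \cref{ipotesi_geometrica}~$\Rightarrow$~\cref{IPCfull_su_tutto_rzero}, uniform in $s\in\rzero$ and $x\in\partial\Omega$. First I would observe that, by $N_\Omega(x)^-\subset T_\Omega(x)$ and the compactness of the unit limiting normal cone $N^1_\Omega(x)$, it suffices to exhibit $\rho>0$ with $\langle\nabla h(x)^\star\Gamma^\alpha(s)h(x),\eta\rangle\leq-\rho$ for every $s$, every $x\in\partial\Omega$ with $N^1_\Omega(x)\neq\emptyset$, and every $\eta\in N^1_\Omega(x)$; at the remaining boundary points $N_\Omega(x)=\{0\}$ forces $N_\Omega(x)^-=\bb R^n\subset T_\Omega(x)$, hence $T_\Omega(x)=\bb R^n$, so there is nothing to check. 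Note that $\langle\nabla h(x)^\star\Gamma^\alpha(s)h(x),\eta\rangle=\langle\Gamma^\alpha(s)h(x),\nabla h(x)\eta\rangle$.

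Second, I would turn \cref{ipotesi_geometrica} into a uniform inward margin. Applying $\delta^{-1}\nabla h(x)^\star$ to the inclusion and using $\nabla h(x)^\star\nabla h(x)^{-\star}=I$ gives $|\delta^{-1}\nabla h(x)^\star h(x)-\eta|<1$ for all $x\in\partial\Omega$, $\eta\in N^1_\Omega(x)$, whence $\langle\nabla h(x)^\star h(x),\eta\rangle>\tfrac{1}{2\delta}|\nabla h(x)^\star h(x)|^2>0$, and in particular $\nabla h(x)^\star h(x)\neq0$ whenever $N_\Omega(x)\neq\{0\}$. Since $x\rightsquigarrow N^1_\Omega(x)$ is upper semicontinuous, the set $\{x\in\partial\Omega:N_\Omega(x)\neq\{0\}\}$ is compact, so the continuous map $x\mapsto\nabla h(x)^\star h(x)$ is bounded away from zero there, and a compactness argument produces a single $c_0>0$ with $\langle\nabla h(x)^\star h(x),\eta\rangle\geq c_0$ for all such $x$ and all $\eta\in N^1_\Omega(x)$, together with finite constants $M_1:=\sup_{\partial\Omega}|h|$ and $M_2:=\sup_{\partial\Omega}(\|\nabla h(x)\|+\|\nabla h(x)^{-1}\|)$. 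Morally this says the reference field $x\mapsto-h(x)$ points strictly into $\Omega$ with a margin independent of the boundary point.

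Third, I would control $\Gamma^\alpha(s)=A-B(s)B^\star(s)P^\alpha(s)$. As $A$ is time independent and $\gamma$-negative definite, $\tfrac{d}{dt}|e^{At}v|^2\leq-2\gamma|e^{At}v|^2$ gives $\|e^{At}\|\leq e^{-\gamma t}$, so the standard a priori bound for the stabilizing Riccati solution (see also \cref{soluzione_Riccati_i_ii} and \cite{DaPrato:2006:RepresentationControl,andersonmoore1971linearoptimalcontrol}), with $B\in L^\infty(\rzero;\bb R^{n\times m})$ and $K\in L^1(\rpiu;\rpiu)$, yields a finite $\Pi$, independent of $\gamma$, with $\sup_{s}\|P^\alpha(s)\|\leq\Pi$. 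Writing $\nabla h(x)\eta=\mu h(x)+w$ with $w\perp h(x)$ and $\mu|h(x)|^2=\langle\nabla h(x)^\star h(x),\eta\rangle\geq c_0$, and using $\langle Ah(x),h(x)\rangle\leq-\gamma|h(x)|^2$,
\[
\langle\Gamma^\alpha(s)h(x),\nabla h(x)\eta\rangle\ \leq\ -\gamma\,\langle\nabla h(x)^\star h(x),\eta\rangle+\langle Ah(x),w\rangle+M_1M_2\|B\|_{\infty}^2\,\Pi\ \leq\ -c_0\,\gamma+C,
\]
with $C$ collecting the remaining $\gamma$-independent terms. Taking $\bar\gamma:=(C+1)/c_0$ makes the right-hand side $\leq-1$ for all $\gamma>\bar\gamma$, uniformly in $(s,x,\eta)$; this is the estimate required in the first step, so \cref{IPCfull_su_tutto_rzero} holds and \cref{theorem1} applies.

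The main obstacle is the \emph{uniformity}, which must survive the possible irregularity of $\partial\Omega$: one needs the inward margin $c_0$ and the Riccati bound $\Pi$ uniformly over the compact boundary and over all $s$, and, above all, one must genuinely dominate the cross term $\langle Ah(x),w\rangle$ — the part of $Ah(x)$ tested against the component of $\nabla h(x)\eta$ transverse to $h(x)$ — by the gain $-c_0\gamma$; this is exactly the point where time-invariance of $A$, its $\gamma$-negative definiteness, boundedness of $B$, and the a priori Riccati bound are used together, and where the threshold $\bar\gamma$ is produced. A minor technical nuisance is keeping straight the three appearances of $\nabla h$ (its inverse, its transpose, and the transported normal cone) in \cref{ipotesi_geometrica} and \cref{IPCfull_su_tutto_rzero}.
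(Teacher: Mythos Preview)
Your reduction to a uniform strict inward estimate and the $\gamma$-free Riccati bound $\sup_s\|P^\alpha(s)\|\le\Pi$ are both sound. The gap is the cross term $\langle Ah(x),w\rangle$ in your splitting $\nabla h(x)\eta=\mu h(x)+w$, $w\perp h(x)$. Negative $\gamma$-definiteness controls only the quadratic form $\langle Av,v\rangle$, not $\|A\|$: for $A=-\gamma I+S$ with $S$ skew-symmetric and $\|S\|$ arbitrary, $A$ is $\gamma$-negative definite yet $\langle Ah(x),w\rangle=\langle Sh(x),w\rangle$ is unbounded. Hence your ``$\gamma$-independent'' constant $C$ in fact carries $\|A\|$, and $\bar\gamma=(C+1)/c_0$ depends on the very matrix whose definiteness is supposed to exceed it. The underlying reason is that your orthogonal decomposition extracts from \cref{ipotesi_geometrica} only the sign information $\langle\nabla h(x)^\star h(x),\eta\rangle>0$; it throws away the quantitative content of the hypothesis, and then there is nothing left to beat the transverse piece of $Ah(x)$.

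The paper's argument is organised around a different decomposition that uses \cref{ipotesi_geometrica} in full. After the harmless rescaling $h\mapsto\sqrt{\delta}\,h$ (which leaves \cref{riccatisystem} unchanged), the hypothesis reads $|h(x)-p|<|p|$ with $p:=\nabla h(x)^{-\star}n$, $n\in N^1_\Omega(x)$. One then splits the \emph{state}, not the normal:
\[
\langle\nabla h(x)^{-1}\Gamma^\alpha h(x),n\rangle=\langle\Gamma^\alpha p,p\rangle+\langle\Gamma^\alpha(h(x)-p),p\rangle\le-\gamma|p|^2+\|\Gamma^\alpha\|\,|p|\,|h(x)-p|.
\]
Compactness upgrades $|h(x)-p|<|p|$ to $|p|\,|h(x)-p|-|p|^2\le-\rho<0$ uniformly on $\partial\Omega$, so after writing $\|\Gamma^\alpha\|\le\gamma+\|B\|_\infty^2\|C_\alpha\|_{2}^2$ the coefficient of $\gamma$ is $\le-\rho$ and the only remainder is $\theta\|B\|_\infty^2\|C_\alpha\|_{2}^2$, giving $\bar\gamma=\rho^{-1}\theta\|B\|_\infty^2\|C_\alpha\|_{2}^2$ with no reference to $A$. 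Two further points: the paper pairs against $\nabla h(x)^{-\star}n$ (equivalently, tests $\nabla h(x)^{-1}\Gamma^\alpha h(x)$ against $n$), which is the vector actually fed into \cref{lemmaRiccatifeasible}, whereas you pair against $\nabla h(x)\eta$; and the whole mechanism is that \cref{ipotesi_geometrica} literally says $h(x)$ lies in the open ball of radius $|p|$ about $p$, so the Cauchy--Schwarz factor on the cross term is \emph{strictly smaller} than the factor on the diagonal term --- a comparison your orthogonal splitting cannot produce.
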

	\begin{proof}
		First of all, we notice that, replacing $h\ccd$ with $\sqrt{\delta} h\ccd$, the solutions of \cref{riccatisystem} are the same. So, we formally denote $h\ccd$ the function given by $x\mapsto \sqrt{\delta} h(x)$. From \cref{ipotesi_geometrica} and since $\nabla h(x)^{-\star}$ has full rank, it follows that $|h(x)-\nabla h(x)^{-\star}n|<|\nabla h(x)^{-\star}n|$ for any $n\in N^1_\Omega(x)$ and $x\in \partial\Omega$. Since $|\nabla h(x)^{-\star}n|\neq 0$, we have that $|\nabla h(x)^{-\star}n||h(x)-\nabla h(x)^{-\star}n|-|\nabla h(x)^{-\star}n|^2<0$ for any $x\in \partial \Omega$ and $n\in N^1_\Omega(x)$. From the compactness of $\Omega$ and $N_\Omega(x)\cap \partial \bb B$ and the continuity of $h\ccd$ and $\nabla h(\cdot)^{-1}$, it follows that there exists $\rho=\rho_{h,\Omega}>0$ satisfying $|\nabla h(x)^{-\star}n||h(x)-\nabla h(x)^{-\star}n|-|\nabla h(x)^{-\star}n|^2\mineq -\rho$ for all $x\in \partial \Omega$ and $\forall n\in N^1_\Omega(x)$.
		Moreover, there exists a constant $\theta=\theta_{h,\Omega}>0$ such that $|\nabla h(x)^{-\star} n(x)||h(x)-\nabla h(x)^{-\star}n|\mineq \theta$ for all $x\in\partial \Omega$ and $n\in N^1_\Omega(x)$. Now, fix $\alpha\in \ccal A$ such that $\ttnn{dom }W^\alpha\neq \emptyset$ and denote
		\eee{
			\bar \gamma\doteq\rho^{-1}\theta \norm{B}_{\infty,\rzero}^2\norm{C_\alpha}_{2,(0,\infty)}^2,
		}
		where $C_\alpha\ccd\doteq\sqrt{Q(\cdot,\alpha\ccd)}$. Assume that $A$ is $\gamma$-negative definite with $\gamma>\bar \gamma$ and let $P=P^\alpha$ be as in \cref{soluzione_Riccati_i_ii}-\ref{Riccati_P_orizzonte_infinito_soluzione}. Since $P\rs$ and $B\rs B^\star\rs$ are positive definite for all $s$, $BB^\star P$ is positive definite. Hence, the matrix $A-B\rs B^\star\rs P\rs$ is $\gamma$-negative definite. Hence for all $n(x)\in N_\Omega(x)\cap \partial \bb B$ and using the Cauchy-Schwarz inequality we have
		\equazioneref{stima1_lemmaWT}{
			&\ps{\nabla h(x)^{-1}\Gamma^\alpha\rs h(x)}{ n(x)}\\
			&= \ps{\Gamma^\alpha\rs\nabla h(x)^{-\star} n(x)}{\nabla h(x)^{-\star} n(x)}+\ps{\Gamma^\alpha\rs(h(x)-\nabla h(x)^{-\star} n(x))}{\nabla h(x)^{-\star} n(x)}\\
			&\mineq -\gamma|\nabla h(x)^{-\star} n(x)|^2+\norm{{\Gamma^\alpha\rs}} |\nabla h(x)^{-\star} n(x)||h(x)-\nabla h(x)^{-\star} n(x)|.
		}
		Now, notice that for any $T\mageq t\mageq 0$, the solution $P_T\ccd=P^\alpha_T\ccd$ of the Riccati differential system \cref{sistema_di_Riccati} on $[t,T]$ satisfies, for all $x\in \Omega$ and $s\in [t,T]$ ( \cite{DaPrato:2006:RepresentationControl}),
		\eee{
			P_T(s) x=
			\int_{s}^{T} e^{\tau A^{\star}} {C_\alpha}^{\star}(\tau) C_\alpha (\tau) e^{\tau A} x d \tau-2\int_{t}^{T} e^{(T-\tau) A^{\star} } P_T(\tau) B(\tau) B^{\star}(\tau) P_T(\tau) e^{(T-\tau) A} x d \tau.
		}
		Thus, since $\norm{e^{\tau A}}\mineq e^{-\tau \gamma}$ for all $\tau \mageq 0$, we have $\ps{P_T\rs x}{x}\mineq  \int_s^T \norm{C_\alpha(\tau)}^2\norm{e^{\tau A}}^2|x|^2d\tau $ $\mineq \int_s^T \norm{C_\alpha(\tau)}^2|x|^2d\tau\mineq\norm{C_\alpha}_{2,(0,\infty)}^2|x|^2$ for all $s\in [t,T]$ and all $T\mageq t\mageq 0$. Hence, passing to the limit as $T\ra \infty$ and applying \cref{lemmaWT}, $\norm{P\rs}\mineq \norm{C_\alpha}_{2,(0,\infty)}^2$. It follows that
		\equazioneref{stima_gamma}{
			\norm{\Gamma^\alpha\rs}\mineq \gamma +\norm{B\rs}^2\norm{C_\alpha}_{2,(0,\infty)}^2\quad \forall s\mageq 0.
		}	
		So, using \cref{stima1_lemmaWT} and \cref{stima_gamma}, we conclude that for all $x\in\partial \Omega$, $s\mageq 0$, and $n\in N^1_\Omega(x)$
		\eee{
			\ps{\nabla h(x)^{-1}\Gamma^\alpha\rs h(x)}{ n(x)}&\mineq (|\nabla h(x)^{-\star}n||h(x)-\nabla h(x)^{-\star}n|-|\nabla h(x)^{-\star}n|^2)\gamma\\
			&\qquad +\norm{B(s)}^2\norm{C}_{2,(0,\infty)}|\nabla h(x)^{-\star} n(x)||h(x)-\nabla h(x)^{-\star}n|\\
			&\mineq -\rho \gamma + \theta\norm{B(s)}^2\norm{C_\alpha}_{2,(0,\infty)}^2.
		}
		Then \cref{IPCfull_su_tutto_rzero} is satisfied and the conclusion follows from \cref{theorem1}.
	\end{proof}
	
	\begin{corollary}
		Assume the assumptions of \cref{corollario_1} with $h=\ttnn{id}$ and there exists $r>0$ such that $B(0,r)\subset \Omega$. Then, for any $\alpha\in \graffe{\beta\,:\, \ttnn{dom }W^\beta \neq \emptyset}$ and any $(t,x)\in (\rzero\times B(0,r))\cap \ttnn{dom }W^\alpha$, the representation \cref{W_uguale_P_piu_int_onInfiniteHorizon} and the feedback laws \cref{feedbaklawinfty} holds.
	\end{corollary}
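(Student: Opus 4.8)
The plan is to read the statement as a localisation of \Cref{corollario_1}: once one checks that, for initial states in $B(0,r)$, the Riccati feedback keeps the trajectory inside $B(0,r)\subseteq\Omega$, the conclusions \cref{W_uguale_P_piu_int_onInfiniteHorizon}--\cref{feedbaklawinfty} follow by specialising \Cref{theorem1}, with no inward pointing condition on $\partial\Omega$ actually needed at those data. First I would recall two facts from the proof of \Cref{corollario_1} (with $A$ $\gamma$-negative definite, as in that proposition): the stabilizing solution $P^\alpha$ of \cref{Riccati_inf_hor} exists and satisfies $\norm{P^\alpha\rs}\mineq\norm{C_\alpha}_{2,(0,\infty)}^2$ for all $s\mageq 0$, and $\Gamma^\alpha\rs=A-B\rs B^\star\rs P^\alpha\rs$ is $\gamma$-negative definite for all $s$; moreover, by the same bound $\norm{P^\alpha_T\rs}\mineq\norm{C_\alpha}_{2,(0,\infty)}^2$ and the positive-definiteness of $P^\alpha_T$ from \Cref{soluzione_Riccati_i_ii}, the finite-horizon closed-loop matrices $A-B\rs B^\star\rs P^\alpha_T\rs$ are negative definite as well, uniformly in $T$ (after enlarging $\bar\gamma$ if necessary).

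Second, because $h=\ttnn{id}$ the Riccati closed-loop Cauchy problem \cref{riccatisystem} reduces to the linear nonautonomous equation $\xi'\rs=\Gamma^\alpha\rs\xi\rs$, so along any of its solutions
\eee{
\frac{d}{ds}\modulo{\xi\rs}^2=2\ps{\Gamma^\alpha\rs\,\xi\rs}{\xi\rs}\mineq -2\gamma\modulo{\xi\rs}^2\mineq 0.
}
Hence $s\mapsto\modulo{\xi\rs}$ is nonincreasing, so for $x\in B(0,r)$ one has $\xi^\alpha\rs\in B(0,\modulo{x})\subseteq B(0,r)\subseteq\Omega$ for all $s\mageq t$, i.e. $\xi^\alpha\ccd$ is feasible. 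The identical estimate with $A-B\rs B^\star\rs P^\alpha_T\rs$ in place of $\Gamma^\alpha\rs$ shows that every finite-horizon feedback trajectory starting at $x\in B(0,r)$ remains in $B(0,r)$, hence is feasible on its time interval.

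Third, I would re-examine the proofs of \Cref{lemmaWT} and \Cref{theorem1} and observe that the inward pointing conditions \cref{IPCfull}, \cref{IPCfull_su_tutto_rzero} enter those arguments only through \Cref{lemmaRiccatifeasible}, i.e. only to guarantee that precisely these Riccati (feedback) trajectories stay in $\Omega$. Having secured that directly for data in $B(0,r)$, the Hamilton--Jacobi verification of \Cref{lemmaWT} gives $W_T^\alpha(t,x)=\ps{h(x)}{P_T^\alpha\rt h(x)}-\int_t^T b(\alpha\rs)\,ds$, and then the limiting argument along $T_j\uparrow\infty$ from the proof of \Cref{theorem1} --- the uniform bound on $\norm{P_T^\alpha\rs}$, Ascoli--Arzel\`a, Dunford--Pettis and Fatou's lemma, unchanged --- yields \cref{W_uguale_P_piu_int_onInfiniteHorizon} together with the feedback law \cref{feedbaklawinfty} for every $(t,x)\in(\rzero\times B(0,r))\cap\ttnn{dom }W^\alpha$. (Alternatively, one may simply restrict \Cref{corollario_1}, whose hypotheses are in force and which already delivers the same conclusion on all of $\ttnn{dom }W^\alpha$; the argument above additionally makes the boundary condition \cref{IPCfull_su_tutto_rzero} superfluous for initial data in $B(0,r)$.)

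The only genuinely delicate point is the bookkeeping in this third step: one must verify that \cref{IPCfull_su_tutto_rzero} is invoked in \Cref{lemmaWT} and \Cref{theorem1} nowhere except to feed \Cref{lemmaRiccatifeasible}. This is indeed the case --- the Hamilton--Jacobi identity for $V^\alpha$ and $H^\alpha$ and the inequality $V^\alpha(t,x)\mineq W^\alpha_T(t,x)$ hold against arbitrary admissible competitors and need no constraint qualification, while admissibility of at least one control at $(t,x)$ is itself furnished by the feasible feedback trajectory built in the second step. Everything else in those proofs transfers verbatim.
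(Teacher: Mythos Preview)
Your main argument is correct but follows a different route from the paper. The paper's proof is one line: with $h=\ttnn{id}$ and $\Omega$ formally replaced by $B(0,r)$, the geometric hypothesis \cref{ipotesi_geometrica} of \Cref{corollario_1} holds with $\delta=r$ (for $x\in\partial B(0,r)$ one has $N^1_{B(0,r)}(x)=\{x/r\}$ and $x-r\cdot(x/r)=0\in r\,\ttnn{int}\,\bb B$), so \Cref{corollario_1} applies on the ball and delivers the conclusion. You instead bypass \cref{ipotesi_geometrica} altogether: since the closed-loop system is $\xi'=\Gamma^\alpha\xi$ with $\Gamma^\alpha\rs$ $\gamma$-negative definite (borrowed from the proof of \Cref{corollario_1}), the Lyapunov estimate $\tfrac{d}{ds}\modulo{\xi}^2\mineq 0$ shows $B(0,r)$ is forward-invariant for both the finite- and infinite-horizon feedbacks, which directly supplies the feasibility that \Cref{lemmaRiccatifeasible} would otherwise furnish; the proofs of \Cref{lemmaWT} and \Cref{theorem1} then run verbatim. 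Your approach is longer but makes explicit why no inward pointing condition on $\partial\Omega$ is needed for data in the ball; the paper's is more economical, reducing everything to the already-proved \Cref{corollario_1}.

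One caveat on your parenthetical alternative: judging from the paper's own proof, ``the assumptions of \Cref{corollario_1}'' in the corollary is meant to \emph{exclude} the geometric hypothesis \cref{ipotesi_geometrica} on $\Omega$ --- the point of the corollary is precisely to trade it for the easily-checked condition $B(0,r)\subset\Omega$. So you cannot simply invoke \Cref{corollario_1} on $\Omega$ to obtain the conclusion on all of $\ttnn{dom}\,W^\alpha$; that hypothesis is not available. This does not affect your main argument, which stands on its own.
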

	\begin{proof}
		The proof follows immediately from \cref{corollario_1} and since \cref{ipotesi_geometrica} is satisfied with $\delta=r$ in which formally $\Omega$ is replaced by $B(0,r)$.
	\end{proof}
	
	%
	
	\appendix \section*{Appendix}
	\renewcommand{\theequation}{A.\arabic{equation}}
	
	\begin{applemma}\label{lemma_appendice}
		Assume the assumptions of \cref{lemmaRiccatifeasible} and moreover that $A$ and $B$ are locally absolutely continuous and $B\in L^\infty(t,T;\bb R^{n\times m})$. Then there exists $\beta>0$ such that for all $x\in \Omega$ and $\sigma>0$ we can find $\xi^\sigma\ccd$ feasible for \cref{sistemacontrollo} on $[t,T]$, with $\xi^\sigma(t)=x$, satisfying
		\equazioneref{lemma_statement_NFT}{
			\norm{\xi-\xi^\sigma}_{\infty,[t,T]}\mineq \beta \sigma,\qquad \xi^\sigma\ccd\subset \ttnn{int } \Omega.
		}
	\end{applemma}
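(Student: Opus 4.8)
The plan is to obtain $\xi^\sigma$ as a control-realizable $O(\sigma)$ perturbation of the reference trajectory $\xi$ furnished by \cref{lemmaRiccatifeasible}, and to control both the $L^\infty$-deviation and the interiority by Gronwall-type estimates anchored to a uniform version of the inward pointing condition \cref{IPCfull}; this is the standard neighboring-feasible-trajectory scheme (as in \cite{Soner86a,bascofrankowska2018lipschitz}) specialised to the present data.

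Write $\hat f(s,x)\doteq \nabla h(x)^{-1}\tonde{A\rs-B\rs B^\star\rs P(s)}h(x)$, so that $\xi$ solves $\xi'\rs=\hat f(s,\xi\rs)$ with the admissible control $\bar u\rs\doteq -B^\star\rs P(s)h(\xi\rs)$. First I would record the regularity gained from the added hypotheses: since $A,B$ are locally absolutely continuous, $B\in L^\infty(t,T;\bb R^{n\times m})$, $h\in C^1$, and $P\in C([t,T];\bb R^{n\times n})$, the maps $\hat f,f_0,f_1$ are, on $[t,T]\times\Omega$, bounded, Lipschitz in $x$ uniformly in $s$ (using \cref{ipotesi_H_primo_primo} together with \cref{ipotesi_H}-\ref{H_Lip_f_L} for $f_0,f_1$), and absolutely continuous in $s$ uniformly in $x\in\Omega$. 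Next I would upgrade \cref{IPCfull} to a uniform margin: since $\Omega$ is compact, $x\rightsquigarrow N^1_\Omega(x)$ is upper semicontinuous, and $\hat f$ is continuous, a compactness argument yields $\rho>0$ and $\eta>0$ such that, for every $s\in[t,T]$, every $y\in\Omega$ with $d_{\partial\Omega}(y)\mineq\eta$, and every unit limiting normal $n$ to $\Omega$ at a nearest boundary point of $y$, the vector $\hat f(s,y)$ points into $\ttnn{int }T_\Omega(y)$ with margin at least $\rho$. Because the realizable velocity set $f_0(s,y)+f_1(s,y)\bb R^m$ is an affine subspace through the interior point $\hat f(s,y)$ of $T_\Omega(y)$, any $O(\sigma)$ modification taken within that subspace still points into $\ttnn{int }T_\Omega(y)$, with margin at least $\rho/2$ for all small $\sigma$.

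I would then set $\xi^\sigma$ to be the solution of $\xi^{\sigma\prime}\rs=f\tonde{s,\xi^\sigma\rs,\bar u\rs+\sigma v^\sigma(s,\xi^\sigma\rs)}$, $\xi^\sigma(t)=x$, where $v^\sigma(s,\cdot)$ is a bounded measurable selection (\cref{measurableselection}), supported in the $\eta$-neighborhood of $\partial\Omega$ and chosen so that there $f\tonde{s,x,\bar u\rs+\sigma v^\sigma(s,x)}$ strictly increases the distance to $\partial\Omega$, while $v^\sigma\equiv0$ away from $\partial\Omega$ (so that $\xi^\sigma$ coincides with $\xi$ wherever $\xi$ runs well inside $\Omega$). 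By construction $\xi^\sigma$ is a trajectory of \cref{sistemacontrollo}. Feasibility $\xi^\sigma\ccd\subset\Omega$ follows, for $\sigma$ small, exactly as in \cref{lemmaRiccatifeasible} via \cref{viability}, the driving field being an $O(\sigma)$ modification of one satisfying the inward pointing condition. The bound $\norm{\xi-\xi^\sigma}_{\infty,[t,T]}\mineq\beta\sigma$ comes from subtracting the two equations, estimating the difference of the right-hand sides by the uniform Lipschitz-in-$x$ constant of $s\mapsto f(s,\cdot,\bar u\rs+\cdot)$ times $\modulo{\xi\rs-\xi^\sigma\rs}$ plus the uniform $O(\sigma)$ size of $\sigma v^\sigma$, and applying Gronwall on $[t,T]$; the constant $\beta$ then depends only on $T-t$, the Lipschitz constants, and $\sup\modulo{v^\sigma}$, hence not on $x$. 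Finally, $\xi^\sigma\ccd\subset\ttnn{int }\Omega$ on $(t,T]$ follows from a differential inequality of the form $\tfrac{d}{ds}\,d_{\partial\Omega}(\xi^\sigma\rs)\mageq c\,\sigma$ valid whenever $d_{\partial\Omega}(\xi^\sigma\rs)\mineq\eta$, which prevents $\xi^\sigma$ from returning to $\partial\Omega$ once it has left it.

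The main obstacle, I expect, is the last step for a merely compact $\Omega$: the function $d_{\partial\Omega}$ is nonsmooth, so the differential inequality for the distance to the boundary cannot be read off a classical gradient and must be justified through the limiting normal cone $N_\Omega$ and its upper semicontinuity — the same ingredients used around \cref{viability} — combined with the uniform margin $\rho$. A related delicate point is to exhibit $v^\sigma$ that is simultaneously measurable, uniformly bounded, realizable by the control system, and distance-increasing near $\partial\Omega$; this is exactly where the affine structure of $\graffe{f_0(s,x)+f_1(s,x)u:u\in\bb R^m}$ and the freshly assumed time-regularity of $A,B$ are needed, the latter being what renders the Gronwall bound linear in $\sigma$ and uniform in the initial state $x$.
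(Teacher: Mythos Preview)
Your outline and the paper's are both neighboring-feasible-trajectory constructions, and your Gronwall step for $\norm{\xi-\xi^\sigma}_{\infty,[t,T]}\mineq\beta\sigma$ is correct; but the routes diverge precisely at the step you yourself flag as the obstacle, and the paper's device there is different from what you propose.

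The paper never touches $d_{\partial\Omega}$ or $N^1_\Omega$. It first extracts from \cref{IPCfull}, via the characterization of the \emph{interior of the Clarke tangent cone} and a finite compactness cover of $[t,T]\times\partial\Omega$, a uniform forward-tube property: there exist $\varepsilon,\eta>0$ such that $y+[0,\varepsilon](\hat f(s,x)+\varepsilon\bb B)\subset\Omega$ for every $(s,x)\in[t,T]\times(\partial\Omega+\eta\bb B)\cap\Omega$ and every $y\in(x+\varepsilon\bb B)\cap\Omega$. It then embeds everything in the bounded differential inclusion $\xi'\in G(s,\xi)$ with $G(s,x)=\{f_0(s,x)+f_1(s,x)u:|u|\mineq\norm{B^\star P}_{\infty,[t,T]}\norm{h}_{\infty,\Omega}\}$, records that $\hat f\in G$, that $G$ is bounded on a neighborhood of $\Omega$, and that the absolute continuity of $A,B$ gives $G(s,x)\subset G(s',x)+\int_s^{s'}\lambda$ (this is exactly where the extra hypotheses on $A,B$ enter, as you guessed), and then invokes \cite[Theorem~1]{bascofrankowska2018lipschitz} with the tube property as input to produce $\xi^\sigma$ and $\beta$; a final measurable selection recovers the control from the inclusion.

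Your differential-inequality route through $d_{\partial\Omega}$ is the heuristic behind such results, but for merely compact $\Omega$ it does not close as written: there is in general no bounded measurable selection $v^\sigma(s,\cdot)$ for which $f(s,\cdot,\bar u\rs+\sigma v^\sigma(s,\cdot))$ increases $d_{\partial\Omega}$ throughout the boundary layer, and the lower Dini derivative of $s\mapsto d_{\partial\Omega}(\xi^\sigma\rs)$ is not controlled by upper semicontinuity of $N^1_\Omega$ alone (nearest boundary points need not be unique, and at nonsmooth boundary points no single direction is ``inward'' for all nearby pieces of $\partial\Omega$). The Clarke tube property is precisely the replacement: it trades ``the perturbed field increases the boundary distance'' for ``short segments starting anywhere nearby, in any direction $\varepsilon$-close to $\hat f$, remain in $\Omega$'', which is what \cref{IPCfull} actually yields for nonsmooth $\Omega$ and what the NFT machinery of \cite{bascofrankowska2018lipschitz} consumes. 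If you want a self-contained argument along your lines, you would have to establish that tube property first and then run the iterative construction of that reference --- which is what the paper simply cites.
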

	\begin{proof}
		We take the same notation as in the proof of \Cref{lemmaRiccatifeasible}. We show the following claim: there exist \(\varepsilon>0\) and \(\eta>0\) satisfying for all $ (s,x) \in [t, T] \times(\partial \Omega+\eta \mathbb{B})  \cap \Omega$ and all $ y \in(x+\varepsilon \mathbb{B}) \cap \Omega$
		\equazioneref{claim}{
			y+[0, \varepsilon](\hat f(s,x)+\varepsilon \mathbb{B}) \subset \Omega.
		}
		Notice that for any \((s,x) \in[t, T] \times \partial \Omega\) and from	
		the characterization of the interior of the Clarke tangent cone (cfr \cite{aubin2009set}) , we can find	\(\varepsilon \in(0,1)\) such that	\(y+[0, \varepsilon](\hat f(s,x)+2 \varepsilon \mathbb{B}) \subset \Omega \) for all \(y \in(x+2 \varepsilon \mathbb{B}) \cap \Omega\).
		Now take any \(\tilde y \in\left(\tilde x+\varepsilon \mathbb{B}\right) \cap \Omega .\) Then, since \(\tilde x+\varepsilon \mathbb{B} \subset x+2 \varepsilon \mathbb{B}\) and \(|\hat f(\tilde t,\tilde x) |\mineq |\hat f(s,x)|+\varepsilon ,\) we may conclude \( \tilde y+[0, \varepsilon]\left(\hat f(\tilde t,\tilde x)+\varepsilon \mathbb{B}\right) \subset \Omega \) for all \(\tilde y \in\left(\tilde x+\varepsilon \mathbb{B}\right) \cap \Omega\). So, we have shown that for any \((s,x) \in[t, T] \times \partial \Omega\) there exist \( \varepsilon_{s,x} \in(0,1)\) and	\(\delta_{s,x} \in\left(0, \varepsilon_{s,x}\right]\) such that, given any \(\left(\tilde t, \tilde x\right) \in\left((s,x)+\delta_{s,x} \mathbb{B}\right) \cap([t, T] \times \Omega),\)
		\eee{
			\graffe{\tilde y+\left[0, \varepsilon_{s,x}\right]\left(\hat f(\tilde t,\tilde x)+\varepsilon_{s,x }\bb B\right)\,:\,y \in\left(\tilde x+\varepsilon_{s,x }\mathbb{B}\right) \cap \Omega}\subset \Omega.
		}
		Using a compactness argument, we conclude that there exist \(\left(t_{i}, x_{i}\right) \in[t, T] \times\partial \Omega\) and $0<\delta_i<\varepsilon_i$, for \(i\in \{1,...,N\}\), such that $ [t, T] \times \partial \Omega \subset \bigcup_{i=1}^{N}\left(\left(t_{i}, x_{i}\right)+\delta_{i} \ttnn{int }\mathbb{B}\right)$, and, for any \(\left(\tilde t, \tilde x\right) \in\left(\left(t_{i}, x_{i}\right)+\delta_{i} \mathbb{B}\right) \cap([S, T] \times \Omega),\)
		\eee{
			\tilde y+\left[0, \varepsilon_{i}\right](\hat f(\tilde t,\tilde x)+\varepsilon_{i} \mathbb{B}) \subset \Omega\quad  \forall \tilde y \in\left(\tilde x+\varepsilon_{i} \mathbb{B}\right) \cap \Omega.
		}
		Notice also that there exists \(\eta \in\left(0, \min _{i} \delta_{i}\right)\) satisfying
		$  [t, T] \times(\partial \Omega+\eta \mathbb{B})   \subset \bigcup_{i=1}^{N}((t_{i}, x_{i})$ $+\delta_{i} \ttnn{int }{B}) $ (otherwise we could find a sequence of points $\left(s_{j}, y_{j}\right) \notin \bigcup_{i=1}^{N}\left(\left(t_{i}, x_{i}\right)+\delta_{i} \mathbb{B}\right)$ such that \(\left(s_{j}, y_{j}\right) \rightarrow(s, y) \in[t, T] \times \partial \Omega\)) . The claim \cref{claim} just follows taking $\varepsilon=\min _{i} \varepsilon_{i}$. Consider now the following differential inclusion
		\equazioneref{sistemacontrollomineqK}{
			\xi'\rs\in G(s,\xi\rs) \quad s\in[t,T]\ttnn{ a.e.},\quad	\xi(t)=x,
		}
		where $G(s,x)\doteq \{ f_0(s,x)+f_1(s,x)u\,:\, |u|\mineq \norm{B^\star P}_{\infty,[t,T]}\norm{h}_{\infty,\Omega}\}$ and $f_0,f_1$ are as in \cref{f_sezione_4}.	Notice that $\hat f(s,x)\in G(s,x)$ for any $(s,x)\in [t,T]\times \bb R^n$ and the trajectory $\xi\ccd$ is solution of \cref{sistemacontrollomineqK} with $ u\rs=-B^\star \rs P\rs h(\xi\rs)$. Moreover, $\sup\{|v|\,:\, v\in G(s,x), s\in [t,T],x\in (\partial \Omega+\bb B)\} =M<\infty $ and there exists $\lambda \in L^1([t,T];\rzero)$ such that $G(s,x)\subset G(s',x)+\int_s^{s'} \lambda(\tau)d\tau$ for all $S\mineq s<s'\mineq T$ and $x\in \Omega$.
		So, arguing in analogous way as in \cite[Theorem 1]{bascofrankowska2018lipschitz} and using \cref{claim}, we conclude that there exists $\beta>0$ (depending on the time interval $[t,T]$) such that for any $x\in \Omega$ and any $\sigma>0$ there a feasible trajectory $\xi^\sigma\ccd$, solving \cref{sistemacontrollomineqK} on $[t,T]$ and starting from $x$, that satisfies \cref{lemma_statement_NFT}. Hence the conclusion follows by applying the measurable selection theorem.
	\end{proof}

	\bibliographystyle{plain}
	\bibliography{BIBLIO_VBPD_twoplayer_rep_inf_hor}

\end{document}